\documentclass[11pt]{article}

\usepackage[margin=1in]{geometry}
\usepackage{amsmath,amssymb,amsthm,mathtools}
\usepackage[T1]{fontenc}
\usepackage{lmodern}
\usepackage{microtype}
\usepackage[colorlinks=true,linkcolor=blue,citecolor=blue,urlcolor=blue]{hyperref}
\usepackage{braket}

\newtheorem{theorem}{Theorem}
\newtheorem{lemma}{Lemma}
\newtheorem{remark}{Remark}

\DeclareMathOperator{\ra}{\rangle}
\DeclareMathOperator{\la}{\langle}
\DeclareMathOperator{\tr}{trace}
\DeclareMathOperator{\Var}{Var}
\DeclareMathOperator{\Inf}{Inf}

\newcommand{\norm}[1]{\left\lVert #1\right\rVert}
\newcommand{\Hq}{\mathbb H}

\newcommand{\un}{\mathbf{1}}

\newcommand{\Id}{\mathbf 1}

\newcommand{\If}{\mathsf{I}}
\newcommand{\Hf}{\mathsf{H}}
\newcommand{\hbin}{h_2}
\newcommand{\proj}[2]{\lvert #1\rangle\!\langle #2\rvert}

\newcommand{\wt}{\operatorname{wt}}

\newcommand{\Tr}{\operatorname{Tr}}

\newcommand{\bE}{\mathbf E}

\newcommand{\op}{\mathrm{op}}
\newcommand{\ps}{\mathrm{ps}}
\newcommand{\HS}{\mathrm{HS}}
\newcommand{\ketbra}[2]{\ket{#1}\!\!\bra{#2}}
\newcommand{\ee}{\mathbf e}

\newcommand{\bP}{\mathbf P}

\theoremstyle{plain}
\newtheorem{corollary}[theorem]{Corollary}
\newtheorem{proposition}[theorem]{Proposition}

\theoremstyle{definition}

\theoremstyle{remark}

\newtheorem{case[theorem]}{Case}

\newcommand{\eps}{\varepsilon}

\newcommand{\bT}{\mathbb{T}}

\newcommand{\bC}{\mathbb{C}}

\newcommand{\bR}{\mathbb{R}}

{\end{list}}

\begin{document}

\title{Tightness of and counterexamples to several quantum estimates}

\author{Joseph Slote, Alexander Volberg, Haonan Zhang}
\date{}

\maketitle

\begin{abstract}
We prove here several tightness results for such quantum inequalities as the comparison of operator norm and product norm of $d$-local hamiltonians, Bohnenblust--Hille inequality for $d$-local hamiltonians and for quantum Fourier entropy-influence conjecture, we also discuss the quantum Aaronson--Ambainis conjecture in a special case of anti-commuting Pauli strings.
\end{abstract}

\tableofcontents 

\section{Introduction}
\label{intro}

Several recent papers considered harmonic analysis estimates in quantum world.
Among those we can list the Bohnenblust--Hille (BH) inequality in its Hamming cube form due to \cite{DMP}, a celebrated KKL inequality, entropy-influence inequality, and the last but not the least the quantum analog of Aaronson--Ambainis conjecture \cite{AA}. 

\medskip

BH inequality on Hamming cube played a crucial part in tight estimates of optimal number of queries in PAC(=probably approximately correct) algorithm of learning function of a fixed degree $d$ on Hamming cube $\{-1,1\}^N$. It was used by  Eskenazis--Ivanisvili \cite{EI} to find this optimal number of queries.

The story of commutative Bohnenblust--Hille inequality can be traced back to its simplest version: Littlewood's $4/3$ lemma, that claims the following: let $\{a_{ij}\}$ be $N\times N$ matrix and let us know that
$\max_{\eps_i =\pm 1}\max_{\delta_j =\pm 1} |\sum_{i, j=1}^N a_{ij} \eps_i \delta_j|\le 1$; then $\sum_{i, j=1}^N |a_{ij} |^{4/3} \le C_0$, where an absolute finite constant $C_0$ does not depend on $N$.
Constant $C_0=\sqrt{2}$ was found by Szarek approximately 50 years later. Now, if $\delta_i=\eps_i$ for all $i$ this is still a true claim (the sharp $C_0$ might not be known), and the claim becomes what is called now Bohnenblust--Hille inequality on Hamming cube for degree $2$ homogeneous polynomials.

The main feature of all that, and of almost all that follows is that {\bf dimension free} constants are involved.

\medskip

Bohnenblust and Hille's paper \cite{BH} appeared in 1931 and was devoted to {\bf dimension free}  estimates of coefficients of degree $d$ analytic polynomial of $N$ variables:
$$
z=(z_1,\dots, z_n), \, f(z) = \sum_\alpha c_\alpha z^\alpha\Rightarrow \Big (\sum_\alpha |c_\alpha|^{\frac{2d}{d+1}}\Big)^{\frac{d+1}{2d}} \le C(d) \|f\|_{L^\infty(\bT^N)}\,.
$$
Here $\alpha=(\alpha_1,\dots, \alpha_N)$ is multi-index, $|\alpha|=\alpha_1+\dots+\alpha_N$, and $\max |\alpha|\le d$.

It is known that exponent $\frac{2d}{d+1}$ cannot be made smaller in general \cite{D}, Sections 7.3, 7.4. See below a new proof.

\medskip

This inequality for function on multi-torus $\bT^N$ solved a conjecture of Harold Bohr from the theory of Dirichlet series (Dirichlet series are tightly connected with analytic functions of very large (or infinite) number of complex variables), see \cite{D}.

\bigskip

In 80 years  the article A. Defant, L. Frerick, J. Ortega-Cerd\`a, M. Ounaies, and K. Seip \cite{DFOOS}  revisited Bohnenblust and Hille result and improved the constant $C(d)$: in the original paper it was exponential in $d$, and in this paper it was subexponential:
$$
C(d) \le e^{c \sqrt{d\log d}}\,.
$$

The struggle to improve constant is still going on, and one of the reason for that turned out to be tightly connected with quantum algorithms. 
But first the paper A. Defant, M. Mastylo, and A. P\'erez \cite{DMP} proved a discrete result: now polynomials were $f(z) = \sum_\alpha c_\alpha z^\alpha$ as before but
$\alpha_i=0$ or $1$, $i=1,\dots, N$, and $z_i =-1$ or $1$. So, these are polynomials on Hamming cube $\{-1,1\}^N$.

Amazingly the constant $C(d)$ again turned out to be $\le e^{c \sqrt{d\log d}}$. 

\medskip

In the theory of quantum algorithms the arguably most famous open questions is Aaronson--Ambainis problem. We will  explain it below, now we  will only mention that it concerns again degree $d$ polynomials on Hamming cube of very large dimension $N$. If it would be solved that would give
the theoretical understanding of speed-up of quantum algorithms versus random classical algorithms for an important class of problems (including the factorization problem). Improvement of the Bohnenblust--Hille (BH) constant on Hamming cube to $d^C$ would solve Aaronson--Ambainis problem for an interesting sub-class of polynomials.

\bigskip

All this is still open, {\bf especially the constants}. But, as it has been already mentioned, the paper A. Defant, M. Mastylo, and A. P\'erez \cite{DMP} got a brilliant application in \cite{EI} for  another question of Theoretical Computer Science (TCS): optimal number of queries in PAC(=probably approximately correct) learning algorithms. Here $f$, $\deg f\le d$, on Hamming cube $\{-1,1\}^N$ is given in the sense that oracle knows it. The task is to design a (random) algorithm of queries to oracle with minimal number of queries to define this function with error $\le \eps$ with probability $\ge 1-\delta$.

\bigskip

After \cite{EI} it was natural for \cite{RWZ} to ask for non-commutative BH inequality and its possible application for hamiltonians formed by linear combination of Pauli strings of weight at most $d$.

The thing is that Hamming cube and polynomials on it have non-commutative version. However, the Bohnenblast--Hille  inequality for tensor polynomials of Pauli matrices or Heisenberg--Weyl matrices (qubit version or q-dit version), was a bit enigmatic at first. The qubit (Pauli) version was simultaneously solved by Volberg--Zhang and also in the following papers:  Chen--Huang--Preskill \cite{CHP}, Huang--Kueng--Preskill \cite{HKP}.

\subsection{Estimate of ground state of $d$-local Hamiltonians from below by non-commutative Bohnenblust--Hille inequality}
\label{below}
The proof in \cite{VZ} was a  reduction of non-commutative qubit case to commutative Hamming cube $\{-1,1\}^N$ case.
The non-commutative BH inequality  considers the matrices (hamiltonians) 
$$
H= \sum_\alpha c_\alpha \sigma^\alpha,
$$
where $c_\alpha $ are just complex (or better real)  coefficients, $\alpha$ is a multi-index $\alpha=(\alpha_1,\dots, \alpha_n)$, where each $\alpha_j$ is in $\{0,1,2,3\}$ and $\sigma_{\alpha_j}$ is 
one of  Pauli matrices $I, X, Y, Z$, where $\sigma_0=I$ is $2\times 2$ identity and $\sigma_1=X, \sigma_2=Y, \sigma_3=Z$ are $2\times 2$ non-trivial Pauli matrices. The symbol $\sigma^\alpha$ means  the tensor product of corresponding $\sigma_{\alpha_j}$.

\medskip

As we already discussed that sharp constant $BH^d_{\pm}$ of \cite{DMP}  is unknown. What \cite{DMP} established is 
\begin{equation}
\label{ncCd}
BH^d_{\pm}\le e^{c \sqrt{d\log d}}\,.
\end{equation}

In \cite{VZ} and in \cite{BSVZ} it has been proved that
\begin{equation}
\label{BHnc}
\Big (\sum_\alpha |c_\alpha|^{\frac{2d}{d+1}}\Big)^{\frac{d+1}{2d}} \le C(d) \|H\|_{op},
\end{equation}
where 
\begin{equation}
\label{ncCd}
BH^d_{M(\bC^2)}:=C(d) \le 3^{d/2}\cdot BH^d_{\pm}\,.
\end{equation}

We repeat this proof below. The estimate is worse than in \eqref{ncCd}, here it is exponential, there it is subexponential (and unknown).

\medskip

Amazingly, in quantum case it is proved in \cite{S} that it must be exponential in $d$. We repeat the proof below.

\subsection{Estimate of ground state of $d$-local Hamiltonians from above}
\label{above}

It is interesting to be able to find the operator norm of a hamiltonian rapidly. Hamiltonians act on $n$-tensor product of $\bC^2$. There is an easier form of norm, the product norm:
\begin{equation}
\label{prod}
\|H\|_{prod} = \sup_{E} \braket{HE,E},
\end{equation}
where $E=e_1\otimes e_2\otimes \dots \otimes e_n$ is a tensor product of unit vectors $e_i\in \bC^2$. Obviously
$$
\|H\|_{prod} \le \|H\|_{op}\,.
$$
It turns out that for homogeneous degree $d$ hamiltonians:
\begin{equation}
\label{opprodh}
\|H\|_{op} \le 3^d\|H\|_{prod} \,.
\end{equation}

Moreover,  dimension-free (independent of dimension $n$) converse estimate exists for all degree $d$ hamiltonians, \cite{BSVZ}:

\begin{equation}
\label{opprod}
\|H\|_{op} \le (3+3\sqrt{2})^d\|H\|_{prod} \,.
\end{equation}

We also mention that  if $H$ involves only two types of nontrivial Pauli matrices (for example only $X, Y$ but not $Z$) then the estimate is better for homogeneous degree $d$ polynomials:
\begin{equation}
\label{opprod2}
\|H\|_{op} \le 2^d\|H\|_{prod} \,.
\end{equation}

For homogeneous degree $d$ polynomials the constant in \eqref{opprod}  is slightly better, it is $3^d$. Again the story of {\bf constants} is quite interesting. For $d=2$ and homogeneous degree $2$ hamiltonians the constant $9$ was found by Elliot Lieb \cite{L}. Many years have passed before the same constant $9$ was proved for all degree $2$ traceless hamiltonians in \cite{BGKT}. The passage from homogeneous to general was tricky if one wants to keep the same dimension-free constant $9$.

Already for $d \ge 3$ this trick does not work, this explains a strange constant in \eqref{opprod}.

\medskip

An obvious question: must the constant be exponential in $d$? We prove below that yes, it must be. It is maybe worthwhile to mention that the construction of tightness
has a non-trivial relation to the refutation of Einstein--Podolsky--Rosen (EPR) paradox, \cite{EPR}, \cite{Mermin1990}.

\section{The homogeneous product-norm comparison problem}

Let
\[
X=\begin{pmatrix}0&1\\1&0\end{pmatrix},\qquad
Y=\begin{pmatrix}0&-i\\i&0\end{pmatrix},\qquad
Z=\begin{pmatrix}1&0\\0&-1\end{pmatrix}.
\]
A Pauli monomial on $n$ qubits is a tensor product of matrices from
$\{I,X,Y,Z\}$.  Its weight is the number of nonidentity tensor factors.  An
operator is called $d$-homogeneous if every Pauli monomial with nonzero
coefficient has weight exactly $d$.


For a Hermitian operator $H$ on $n$ qubits, define its product-state numerical
radius by
\begin{equation}
\|H\|_{prod}
:=
\sup_{\norm{e_r}=1}
\left|
\left\langle \bigotimes_{r=1}^n e_r,
A\bigotimes_{r=1}^n e_r
\right\rangle
\right|.
\label{eq:ps}
\end{equation}
We use the homogeneous comparison constant
\begin{equation}
\Gamma_d^{\mathrm{hom}}
:=
\sup_{n\geq d}
\sup_{\substack{0\neq H=H^\dagger\\H\text{ is }d\text{-homogeneous}}}
\frac{\norm{H}_{op}}{\norm{H}_{prod}}.
\label{eq:Gamma}
\end{equation}
The known product-state upper estimate gives
\begin{equation}
\Gamma_d^{\mathrm{hom}}\leq 3^d;
\label{eq:known-upper}
\end{equation}
indeed, it is enough in that estimate to maximize over tensor products of Pauli
eigenstates Theorem~1 of \cite{BSVZ}.

The $X,Y$ tensor Rudin--Shapiro construction below gives a degree-$d$ example with ratio
$2^{d-1}$, hence lower exponential base $2$.  


\subsection{A version of Quantum Rudin--Shapiro polynomials}
\label{RSp}

Rudin--Shapiro polynomials were used to build an example of $2\pi$ periodic function of class $\text{Lip}_{1/2}$ but whose Fourier series is not absolutely convergent.
Bernstein's theorem claims that as soon as function is slightly more smooth, that is lies in $\text{Lip}_{1/2+\epsilon}$ then its Fourier series  converges absolutely.

\bigskip

Now we will consider a certain analog but in quantum world.

\medskip

Consider the recursion
$$
P_0=Y, \,\,Q_0=X
$$
\begin{align}
\label{PnQn}
&P_n= X\otimes P_{n-1}+Y\otimes Q_{n-1},
\\
&Q_n = X\otimes Q_{n-1}- Y\otimes  P_{n-1}\,.
\end{align}

One can write it in complex form
\begin{equation}
\label{com}
P_n+i Q_n = (X-iY)\otimes (P_{n-1} +i Q_{n-1})\,.
\end{equation}
Then
$$
\deg P_n=\deg Q_n =n+1,\,\, \|P_n\|_{HS} = \|Q_n\|_{HS}  = \sqrt{2}^n\,.
$$
(As always we consider normalized trace, and therefore normalized Hilbert--Schmidt norm.)

Therefore,
\begin{equation}
\label{opnorm}
\|P_n\|_{op} \ge 2^{n/2}, \quad \|Q_n\|_{op}  \ge 2^{n/2}\,.
\end{equation}

\bigskip

Now let us look at $\|P_n\|_{prod}$,  $\|Q_n\|_{prod}$.

Let $E^n = e_n\otimes E^{n-1}= e_1\otimes e_{2} \otimes\dots\otimes e_{n+1}$.

\medskip

We will use below the Bloch coordinates for unit vector $e_j, j=1,\dots, n+1$, namely $a_x^j:=\la X e_j, e_j\ra, a_y^j\la Y e_j, e_j\ra, a_z^j\la Z e_j, e_j\ra$. We know that
$$
(a_x^j)^2 +(a_y^j)^2 +(a_z^j)^2 =1\,.
$$

Put $p_n:= \la P_n E^n, E^n \ra , \,\, q_n := \la q_n E^n, E^n \ra $.
Then

$$
p_n:= \la P_n E^n, E^n \ra  = p_{n-1} a_x^1 + q_{n-1} a_y^1,\quad  q_n:= \la Q_n E^n, E^n \ra  = q_{n-1} a_x^1 - p_{n-1} a_y^1\,.
$$

Denote
$$
F_n =\begin{bmatrix} \!\!\!\!p_n\,\,\,\, \,\,q_n\\
q_n,\,\, -p_n
\end{bmatrix}
$$

Then of course
\begin{equation}
\label{F}
F_n = F_{n-1} \cdot \begin{bmatrix} a_x^1, -a_y^1\\
a_y^1, \,\,\,a_x^1,
\end{bmatrix}
\end{equation}
where $a_x^1, a_y^1$ are Bloch coordinates of pure state $e_1\otimes e_1$.

Let us write 
$$
\begin{bmatrix} a_x^1, -a_y^1\\
a_y^1, \,\,\,a_x^1\end{bmatrix}=\sqrt{(a_x^1)^2 + (a_y^1)^2} \cdot U^{(1)},
$$
where one can notice two things: 1) $U^{(1)}$ is unitary, 2) $b_1:=\sqrt{(a_x^1)^2 + (a_y^1)^2} \le 1$.

Let $b_j:=\sqrt{(a_x^j)^2 + (a_y^j)^2} $, $j=1,\dots, n+1$ and
$$
\begin{bmatrix} a_x^j, -a_y^j\\
a_y^j, \,\,\,a_x^j\end{bmatrix}=: \sqrt{(a_x^j)^2 + (a_y^j)^2} \cdot U^{(j)} =b_j\cdot  U^{(j)} ,
$$
where $ U^{(j)} $ is unitary, $b_k\le 1$,.

\bigskip

Using these  observations 1), 2) above we can iterate \eqref{F}:
$$
\begin{bmatrix} \!\!p_n,\,\, \,q_n\\
q_n,\,\,\, -p_n
\end{bmatrix}=F_n= b_1 F_{n-1} U^{(1)}= b_1b_2 F_{n-2} U^{(2)} U^{(1)}=\dots
$$

\bigskip

So, 
$$
|p_n|= |\la P_n e_1\otimes e_2\otimes\dots,  e_1\otimes e_2\otimes\dots\ra|\le 1\Rightarrow \|P_n\|_{prod} \le 1,
$$
but $\|P_n\|_{op}\ge 2^{n/2}$ as we saw above. (It can be $2^n$ but I do not see it immediately.)

\medskip

So product norm is exponentially (in degree) smaller than operator norm.

We will now reconcile this example with the example of \cite{Mermin1990} that elaborates on idea of  \cite{GHZ} devoted to Einstein--Podolsky--Rosen paradox \cite{EPR}.

\subsection{Computing operator norm of $P_n$}
\label{opnorm}

Consider
$$
\tilde Q_n:= \frac12\big(( X-iY)^{\otimes n} + (X+iY)^{\otimes n}\big)\,.
$$
$$
\tilde P_n:= -\frac1{2i}\big(( X-iY)^{\otimes n} - (X+iY)^{\otimes n}\big)\,.
$$

Then 
\begin{align}
&2\tilde Q_n  = X \otimes [ ( X-iY)^{\otimes n-1} +  (X+iY)^{\otimes n-1} ]  - iY \otimes [( X-iY)^{\otimes n-1} -  (X+iY)^{\otimes n-1} ]=
\\
& X\otimes 2\tilde P_n - Y\otimes 2\tilde Q_n\,.
\end{align}
\begin{align}
&2\tilde P_n  = iX \otimes [ ( X-iY)^{\otimes n-1} - (X+iY)^{\otimes n-1} ]  +Y \otimes [( X+iY)^{\otimes n-1} +  (X+iY)^{\otimes n-1} ]=
\\
& X\otimes 2\tilde P_n + Y\otimes 2\tilde Q_n\,.
\end{align}
And $\tilde P_0=Y, \tilde Q_0=X$.  Hence,
\begin{equation}
\label{theSame}
\tilde P_n= P_n, \,\, \tilde Q_n =Q_n\,.
\end{equation}

So the tensor Rudin--Shapiro polynomials are Mermin's polynomials from \cite{Mermin1990}.

\medskip

The nice thing is that in their tilde form one can easily compute their  operator norms.

\medskip

In fact, one can write using the form of $X\pm i Y$)
$$
P_1 = \frac12\big(|0\ra \la 1 | +  |1\ra \la 0 | \big)\,.
$$
And thus
$$
P_n = 2^{n-1}\big(|0^{\otimes n}\ra \la 1^{\otimes n}|  +  |1^{\otimes n}\ra \la 0^{\otimes n}|  \big)\,.
$$

The eigenvector with eigenvalue $2^{n-1}$ is just (by a direct simple calculation)
\begin{equation}
\label{Phi}
\Phi_n := \frac1{\sqrt{2}} \big( |0^{\otimes n}\ra + |1^{\otimes n}\ra \big)\,.
\end{equation}

We finally conclude that
\begin{equation}
\label{norm}
\|P_n\|_{op}= 2^{n-1}\,.
\end{equation}
The same is valid for $Q_n$.

\bigskip

It is interesting to notice that $\Phi_3$ was used in Daniel M. Greenberger, Michael A. Horne, and Anton
Zeilinger paper \cite{GHZ} to elegantly solve Einstein--Podolsky--Rosen paradox of \cite{EPR}.

\section{Improving the constant}
\label{conclusion}

In our paper \cite{BSVZ} we proved the estimate for degree $d$ homogeneous hamiltonians in $X, Y, Z$ of the type
$$
\|H\|_{op} \le 3^d \|H\|_{prod},
$$
where $d$ is the degree, so the number of live qubits in each monomial and this estimate absolutely does not depend on the number $n$ of qubits.

In the case $d=2$ homogeneous hamiltonians this was proved by Lieb in \cite{L}, the constant then is $9$. It is still $9$ for all hamiltonians of degree $2$, see \cite{BGKT}.

\medskip

What about the sharpness of $3^{degree}$ constant? Notice that our proof in \cite{BSVZ} would give the estimate
$$
\|H\|_{op} \le 2^d \|H\|_{prod},
$$
if Hamiltonian has only $X, Y$  (and no $Z$ ) involved.

\medskip

What we just proved above is that for such hamiltonians $2^d$ is {\bf very tight}. In fact we gave the example--$P_d$--for which 
the estimate is exactly with constant $2^{d-1}$, where $d$ is the degree (not the number of qubits, the latter can be arbitrary).

\subsection{A quaternion-type construction that improves the constant}

Our aim is to use all three Bloch
coordinates while retaining an exact norm-preserving transfer rule.

Let $\Hq$ denote the real quaternion algebra with basis
$1,\mathbf i,\mathbf j,\mathbf k$ and multiplication rules
\[
\mathbf i^2=\mathbf j^2=\mathbf k^2=-1,
\qquad
\mathbf i\mathbf j=\mathbf k,
\quad
\mathbf j\mathbf k=\mathbf i,
\quad
\mathbf k\mathbf i=\mathbf j.
\]
Write
\[
\ee_1=\mathbf i,\qquad
\ee_2=\mathbf j,\qquad
\ee_3=\mathbf k,
\qquad
\sigma_1=X,\ \sigma_2=Y,\ \sigma_3=Z.
\]
Below $X_r, Y_r, Z_r$ are $n$-tensor products where on each qubit one has identity matrix except the $r$-th qubit, where one has $X, Y, Z$ correspondingly.

\medskip

On $n$ qubits form the quaternion-valued operator
\begin{equation}
\mathcal R_d
:=
\prod_{r=1}^d
\left(X_r\mathbf i+Y_r\mathbf j+Z_r\mathbf k\right)
=
H_d^{(0)}+H_d^{(1)}\mathbf i+H_d^{(2)}\mathbf j+H_d^{(3)}\mathbf k,
\label{eq:Rdef}
\end{equation}
where the factors are multiplied in increasing qubit order indicated by $r$.  Each component
$H_d^{(\mu)}$ is a real linear combination of full-weight Pauli monomials and is
therefore Hermitian and $d$-homogeneous.

This was the recursion
$$
\mathcal R_d =\mathcal R_{d-1}\cdot \left(X_d\mathbf i+Y_d\mathbf j+Z_d\mathbf k\right)
$$
not unlike the one in \eqref{com}.

 
 Equivalently the above recursion $\mathcal R_d =\mathcal R_{d-1}\cdot \left(X_d\mathbf i+Y_d\mathbf j+Z_d\mathbf k\right)$ with initial values
\[
H_0^{(0)}=I,
\qquad
H_0^{(1)}=H_0^{(2)}=H_0^{(3)}=0,
\]
can be written in coordinate form
\begin{align}
H_r^{(0)}
&=-H_{r-1}^{(1)}\cdot X_r
  -H_{r-1}^{(2)}\cdot Y_r
  -H_{r-1}^{(3)}\cdot Z_r,
\label{eq:rec0}\\
H_r^{(1)}
&= H_{r-1}^{(0)}\cdot X_r
  +H_{r-1}^{(2)}\cdot Z_r
  -H_{r-1}^{(3)}\cdot Y_r,
\label{eq:rec1}\\
H_r^{(2)}
&= H_{r-1}^{(0)}\cdot Y_r
  -H_{r-1}^{(1)}\cdot Z_r
  +H_{r-1}^{(3)}\cdot X_r,
\label{eq:rec2}\\
H_r^{(3)}
&= H_{r-1}^{(0)}\cdot Z
  +H_{r-1}^{(1)}\cdot Y
  -H_{r-1}^{(2)}\cdot X.
\label{eq:rec3}
\end{align}
The first scalar component is the antiferromagnetic Heisenberg interaction
\begin{equation}
H_2^{(0)}=-(X\otimes X+Y\otimes Y+Z\otimes Z)\otimes I\otimes\dots\otimes I\,.
\label{eq:H2}
\end{equation}
which  (in its essential, first two first qubit part) is
$$
H_2^{(0)} = I\otimes I-2\,\text{SWAP} \,.
$$
It has eigenvalue $3$ on the singlet and eigenvalue $-1$ on the triplet
subspace, whereas its product-state numerical radius is $1$. In fact, using Bloch coordinates of vectors $e_1, e_2$ we see that
$$
|\braket {H_2^{(0)} e_1\otimes e_2, e_1\otimes e_2}| = |a_1^x a_2^x +a_1^ya_2^y+ a_1^za_2^z|\le 1\,.
$$
Now let $E=e_1\otimes e_2\otimes\dots \otimes e_n$. Then
$$
|\braket{H_d^{(0)}E, E}| \le |\braket {\mathcal R_d E,E }= |\prod_{r=1}^d (\mathbf i a_r^x + \mathbf j a_r^y + \mathbf k a_r^z)| \le 1\,.
$$

\subsection{Unitary transformations as in $X, Y$ construction}

The above  coordinate expression of $\vec h_r:=(H_r^{(0)}, H_r^{(0)}, H_r^{(0)}, H_r^{(0)})$ via $\vec h_{r-1}=(H_{r-1}^{(0)}, H_{r-1}^{(0)}, H_{r-1}^{(0)}, H_{r-1}^{(0)})$ can be written in matrix form almost exactly as in \eqref{F} (but of course a bit more complicated).  In fact, let us see a full analogy. Relationship \eqref{F} can be written down in this form (we write only first $r$ qubit part, the rest is identity):
$$
\begin{pmatrix} Q_r, P_n\\
Q_r, P_r\end{pmatrix} = \begin{pmatrix} X, -Y\\
\!\!\!Y, \,\,\,X \end{pmatrix} \otimes \begin{pmatrix} Q_{r-1}, P_{r-1}\\
Q_{r-1}, P_{r-1}\end{pmatrix} \,.
$$
Analogously, the relations (5)-(8) is nothing else as
$$
\vec h_r = 
\begin{pmatrix}
0&-X&-Y&-Z\\
X&0&Z&-Y\\
Y&-Z&0&X\\
Z&Y&-X&0
\end{pmatrix}
\otimes \vec h_{r-1}\,.
$$

As in the $X, Y$ case this matrix becomes unitary if we replace $X, Y, Z$ matrices by numbers $x, y, z$ such that $x^2+y^2+z^2=1$. And this is another explanation why $\|H_d^{(0)}\|_{op} =1$.

\subsection{Counting scalar quaternion words}

Let $A_\ell$ be the number of words of length $\ell$ in
$\{\mathbf i,\mathbf j,\mathbf k\}$ whose product is scalar, and let $B_\ell$
be the number whose product is non-scalar.  Thus $A_0=1$ and $B_0=0$.

\begin{lemma}[Exact scalar-word count]
For every $\ell\geq0$,
\begin{equation}
A_\ell=\frac{3^\ell+3(-1)^\ell}{4}.
\label{eq:Aell}
\end{equation}
In particular, the number of nonzero Pauli coefficients in $H_{2m}^{(0)}$ is
\begin{equation}
N_m:=A_{2m}=\frac{9^m+3}{4}.
\label{eq:Nm}
\end{equation}
\end{lemma}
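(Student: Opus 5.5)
Products of words in $\{\mathbf i,\mathbf j,\mathbf k\}$ lie in the quaternion group $\{\pm1,\pm\mathbf i,\pm\mathbf j,\pm\mathbf k\}$. So we track only the class of a product modulo sign, which lives in the Klein four-group $V=\{1,\mathbf i,\mathbf j,\mathbf k\}/\{\pm1\}\cong\mathbb Z_2\times\mathbb Z_2$. A word is \emph{scalar} exactly when its image in $V$ is the identity. Multiplying on the right by $\mathbf i,\mathbf j$ or $\mathbf k$ sends the identity class to one of the three non-identity classes. It sends a non-identity class to the identity class in exactly one way (multiply by itself) and to each of the other two non-identity classes in exactly one way. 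This gives the recursion
\[
A_\ell=B_{\ell-1},\qquad B_\ell=3A_{\ell-1}+2B_{\ell-1},
\]
subject to $A_\ell+B_\ell=3^\ell$. Substituting $B_{\ell-1}=3^{\ell-1}-A_{\ell-1}$ gives $A_\ell=3^{\ell-1}-A_{\ell-1}$ with $A_0=1$. The claimed formula satisfies this recursion and the initial condition, since
\[
\frac{3^{\ell-1}+3(-1)^{\ell-1}}{4}+\frac{3^\ell+3(-1)^\ell}{4}=3^{\ell-1}.
\]
Induction then proves \eqref{eq:Aell}. Alternatively, one can average the characters of $V$: the trivial character contributes $3^\ell$, and each of the three nontrivial characters takes the values $+1,-1,-1$ on $\mathbf i,\mathbf j,\mathbf k$ in some order, so each contributes $(-1)^\ell$. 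Dividing the total by $|V|=4$ gives the formula directly.

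For \eqref{eq:Nm} we expand $\mathcal R_{2m}$ from \eqref{eq:Rdef}. Each choice of one term from each factor gives a Pauli monomial $\sigma_{a_1}\otimes\cdots\otimes\sigma_{a_{2m}}$ on the first $2m$ qubits, multiplied by the quaternion word $\ee_{a_1}\cdots\ee_{a_{2m}}=\pm1$ or $\pm$ a unit. Distinct choices $(a_1,\dots,a_{2m})\in\{1,2,3\}^{2m}$ give distinct Pauli monomials, so there is no cancellation. The coefficient of the monomial in the scalar part $H_{2m}^{(0)}$ is therefore nonzero (equal to $\pm1$) exactly when the word is scalar. The number of such monomials is $A_{2m}=(9^m+3)/4$.

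The only points requiring care are this no-cancellation/distinctness observation and the fact that the map to $V$ forgets signs consistently. The latter holds because $\pm1$ is central and is the kernel of the quotient map. Neither should be a real obstacle.
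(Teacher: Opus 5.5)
Your proposal is correct and follows the paper's proof. The paper derives the same recursion $A_{\ell+1}=B_\ell$, $B_{\ell+1}=3A_\ell+2B_\ell$ and solves it using the eigenvalues $3$ and $-1$ of the transition matrix; you instead reduce it with $A_\ell+B_\ell=3^\ell$ to $A_\ell=3^{\ell-1}-A_{\ell-1}$ and induct, or use the Klein four-group character sum. Your remark that distinct index strings give distinct Pauli monomials, so each scalar word contributes a coefficient $\pm1$ with no cancellation, supplies the step the paper leaves implicit when it says \eqref{eq:Nm} follows by setting $\ell=2m$.
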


\begin{proof}
Appending any imaginary unit to a scalar word produces a non-scalar word.  If a
word is non-scalar, exactly one of the three possible appended units produces a
scalar product and the other two remain non-scalar.  Therefore
\begin{equation}
A_{\ell+1}=B_\ell,
\qquad
B_{\ell+1}=3A_\ell+2B_\ell.
\label{eq:countrec}
\end{equation}
The transition matrix has eigenvalues $3$ and $-1$, and the initial condition
gives \eqref{eq:Aell}.  Equation \eqref{eq:Nm} follows by setting
$\ell=2m$.
\end{proof}

\subsection{Two interesting mixed states}

For distinct sites $a,b$, define vector
\begin{equation}
\ket{s_{ab}}
:=
\frac{\ket{0_a1_b}-\ket{1_a0_b}}{\sqrt2}.
\label{eq:singlet}
\end{equation}
On a cycle of $2m$ qubits, consider two convex combinations of product states  (call them dimer coverings):
\begin{align}
\ket{D_0}
&:=
\ket{s_{1,2}}\ket{s_{3,4}}\cdots\ket{s_{2m-1,2m}},
\label{eq:D0}\\
\ket{\widetilde D_1}
&:=
\ket{s_{1,2m}}
\ket{s_{2,3}}\ket{s_{4,5}}\cdots\ket{s_{2m-2,2m-1}}.
\label{eq:D1tilde}
\end{align}
Their transition graph is a single loop.

\begin{lemma}[Dimer overlap]
The overlap is
\begin{equation}
\langle D_0,\widetilde D_1\rangle
=(-1)^{m-1}2^{1-m}.
\label{eq:raw-overlap}
\end{equation}
Hence, after setting
\begin{equation}
\ket{D_1}:=(-1)^{m-1}\ket{\widetilde D_1},
\qquad
s_m:=\langle D_0,D_1\rangle=2^{1-m},
\label{eq:positive-overlap}
\end{equation}
the overlap is positive.
\end{lemma}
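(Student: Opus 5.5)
The plan is to expand both states in the computational basis and compare coefficients directly; the overlap is then a sum over the basis strings where both states have nonzero amplitude. All amplitudes are real, so no complex conjugation is needed.

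\textbf{Step 1: expand the two coverings.} Each singlet $\ket{s_{ab}}$ is a sum of two basis states with coefficients $\pm 2^{-1/2}$. The sign is $+$ when site $a$ (the first-listed site) carries $0$, and $-$ when it carries $1$. Hence $\ket{D_0}$ is supported on the strings $x\in\{0,1\}^{2m}$ with $x_{2k-1}\neq x_{2k}$ for all $k$. Each such string has amplitude
\[
2^{-m/2}(-1)^{\#\{k:\,x_{2k-1}=1\}} .
\]
Likewise $\ket{\widetilde D_1}$ is supported on strings with $x_1\neq x_{2m}$ and $x_{2k}\neq x_{2k+1}$ for $1\le k\le m-1$. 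Its amplitude is $2^{-m/2}$ times $(-1)^{[x_1=1]}$ times $(-1)^{\#\{k\le m-1:\,x_{2k}=1\}}$.

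\textbf{Step 2: identify the common support.} This is where the single-loop structure of the transition graph enters. Both sets of constraints together force $x_i\neq x_{i+1}$ around the whole cycle of even length $2m$. So the common support consists of exactly the two N\'eel strings
\[
A=0101\cdots01,\qquad B=1010\cdots10 .
\]
Therefore
\[
\langle D_0,\widetilde D_1\rangle=2^{-m}\bigl(\varepsilon_0(A)\varepsilon_1(A)+\varepsilon_0(B)\varepsilon_1(B)\bigr),
\]
where $\varepsilon_0,\varepsilon_1$ denote the signs of the amplitudes in $\ket{D_0}$ and $\ket{\widetilde D_1}$ respectively.

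\textbf{Step 3: bookkeeping of signs.} This is the only place needing care.
\begin{itemize}
\item For $A$: in $\ket{D_0}$ every odd site is $0$, so the sign is $+$. In $\ket{\widetilde D_1}$ the pair $(1,2m)$ gives $+$ (since $x_1=0$). Each of the $m-1$ pairs $(2k,2k+1)$ has $x_{2k}=1$ and gives $-$. So $\varepsilon_0(A)\varepsilon_1(A)=(-1)^{m-1}$.
\item For $B$: in $\ket{D_0}$ all $m$ odd sites are $1$, giving $(-1)^m$. In $\ket{\widetilde D_1}$ only the pair $(1,2m)$ contributes a $-$ (since $x_1=1$), and the other pairs give $+$. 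So $\varepsilon_0(B)\varepsilon_1(B)=(-1)^{m+1}=(-1)^{m-1}$.
\end{itemize}

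The two contributions therefore add rather than cancel, and the overlap equals $2\cdot 2^{-m}(-1)^{m-1}=(-1)^{m-1}2^{1-m}$, which is \eqref{eq:raw-overlap}. Multiplying $\ket{\widetilde D_1}$ by $(-1)^{m-1}$ gives \eqref{eq:positive-overlap} immediately.

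The degenerate case $m=1$ also works. There $\widetilde D_1$ is the singlet $s_{1,2}$ written with the same site order as in $D_0$, so the two states coincide and the overlap is $1=2^{0}$, consistent with the formula.

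The main obstacle is keeping the orientation convention $\ket{s_{ab}}$ versus $\ket{s_{ba}}=-\ket{s_{ab}}$ straight in Step 3. Everything else is forced by the parity argument in Step 2.
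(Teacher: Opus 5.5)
Your proposal is correct and takes the same route as the paper: expand both dimer coverings in the computational basis, note that the combined constraints force alternation around the even cycle so only $0101\cdots01$ and $1010\cdots10$ survive, and observe that each contributes $2^{-m}$ in magnitude. Your explicit orientation bookkeeping, which gives the sign $(-1)^{m-1}$ for both strings, fills in the step the paper asserts in one line, and your check of the degenerate case $m=1$ is consistent with the formula.
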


\begin{proof}
A common computational-basis string occurs in both dimer very rarely, namely  only if  in the string neighboring
bits alternate around the cycle.  There are exactly two such strings,
$0101\cdots01$ and $1010\cdots10$.  Each contributes magnitude $2^{-m}$ to
the overlap, and the orientation convention in \eqref{eq:singlet} gives the
same sign $(-1)^{m-1}$ for both contributions.
\end{proof}

\subsection{Pauli coefficients}

For $\mathbf a=(a_1,\ldots, a_d)\in[3]^d$ and a family of quaternions, put 
\begin{equation}
c_{\mathbf a}
:=
\text{Scalar}(\ee_{a_1}\ee_{a_2}\cdots\ee_{a_d}).
\label{eq:cdef}
\end{equation}
Since a product of quaternion units belongs to
$\{\pm1,\pm\mathbf i,\pm\mathbf j,\pm\mathbf k\}$, one has
$c_{\mathbf a}\in\{0,\pm1\}$.  Expanding \eqref{eq:Rdef} gives
\begin{equation}
H_d^{(0)}
=
\sum_{\mathbf a\in[3]^d}
 c_{\mathbf a}\,
 \sigma_{a_1}\otimes\cdots\otimes\sigma_{a_d}.
\label{eq:Hcoeff}
\end{equation}
For even length $d=2m$, the standard matrix representation
\begin{equation}
\rho(\ee_a)=-i\sigma_a
\label{eq:rho}
\end{equation}
is an algebra homomorphism $\Hq\to M_2(\bC)$, and scalar part is half the trace.
Thus
\begin{equation}
c_{\mathbf a}
=
\frac{(-1)^m}{2}
\Tr(\sigma_{a_1}\sigma_{a_2}\cdots\sigma_{a_{2m}}).
\label{eq:ctrace}
\end{equation}
In particular, the coefficients of $H_{2m}^{(0)}$ are invariant under cyclic
rotation of the sites.

\bigskip  

\subsection{Computing $\langle D_0,P_{\mathbf a}D_1\rangle$}
The next identity is the key point: how to reproduce
the scalar quaternion coefficient.

\begin{lemma}
For every $\mathbf a=(a_1,\ldots,a_{2m})\in[3]^{2m}$, let
\[
P_{\mathbf a}:=\sigma_{a_1}\otimes\cdots\otimes\sigma_{a_{2m}}.
\]
Then
\begin{equation}
\langle D_0,P_{\mathbf a}D_1\rangle
= 2^{1-m}\,c_{\mathbf a}.
\label{eq:loopidentity}
\end{equation}
\end{lemma}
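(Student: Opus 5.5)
The plan is to write both dimer coverings as matrix-product states around the cycle. Then $\langle D_0,P_{\mathbf a}D_1\rangle$ collapses to a single $2\times 2$ trace of a product of Pauli matrices, which \eqref{eq:ctrace} identifies with $c_{\mathbf a}$.

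\textbf{Step 1: write the singlet as a matrix.} Write $\ket{s_{ab}}=\frac1{\sqrt2}\sum_{i,j}\varepsilon_{ij}\ket{i_a j_b}$ with $\varepsilon=\begin{pmatrix}0&1\\-1&0\end{pmatrix}=iY$.

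\textbf{Step 2: contract around the cycle.} Each factor of $D_0$ contributes a matrix $\varepsilon$ on the edge $(2k-1,2k)$. Each factor of $\widetilde D_1$ contributes $\varepsilon$ on the edges $(2k,2k+1)$ and on the closing edge $(1,2m)$. The operator $P_{\mathbf a}$ inserts $\sigma_{a_r}$ on the physical index of site $r$.

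Contracting all indices around the single loop gives
\[
\langle D_0,P_{\mathbf a}\widetilde D_1\rangle = 2^{-m}\,\Tr\big(M_1M_2\cdots M_{2m}\big),
\]
up to one global sign $\theta\in\{\pm1\}$. Here each $M_r$ is either $\sigma_{a_r}$ or $\sigma_{a_r}^T$, sandwiched between copies of $\varepsilon$ or $\varepsilon^T$. Which of the two occurs depends only on whether site $r$ is the first or second index of its dimers, so it alternates with the parity of $r$. Consequently $\theta$ and the transposition pattern depend only on the geometry, not on $\mathbf a$.

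\textbf{Step 3: remove the transposes.} Use $\varepsilon\,\sigma^T\varepsilon^{-1}=-\sigma$ for $\sigma\in\{X,Y,Z\}$, which is the same as $Y\sigma^TY=-\sigma$, together with $\varepsilon^T=-\varepsilon=\varepsilon^{-1}$. This moves all the $\varepsilon$'s together, where they cancel in pairs. The transposes occur on exactly $m$ sites (one per dimer of $D_0$), so for $\mathbf a\in[3]^{2m}$ we obtain
\[
\langle D_0,P_{\mathbf a}D_1\rangle=\kappa\,(-1)^m\,\Tr(\sigma_{a_1}\cdots\sigma_{a_{2m}}),
\]
where $\kappa=\pm2^{-m}$ is independent of $\mathbf a$. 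The same manipulation with every $\sigma_{a_r}$ replaced by $I$ gives $\langle D_0,D_1\rangle=\kappa\Tr(I)=2\kappa$, with no $(-1)^m$ because $I^T=I$ commutes with $\varepsilon$.

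\textbf{Step 4: fix the constant and conclude.} By the dimer-overlap lemma and \eqref{eq:positive-overlap}, $\langle D_0,D_1\rangle=2^{1-m}$, so $\kappa=2^{-m}$. Substituting \eqref{eq:ctrace}, $\Tr(\sigma_{a_1}\cdots\sigma_{a_{2m}})=2(-1)^m c_{\mathbf a}$, gives
\[
\langle D_0,P_{\mathbf a}D_1\rangle=2^{-m}(-1)^m\cdot 2(-1)^m c_{\mathbf a}=2^{1-m}c_{\mathbf a},
\]
which is \eqref{eq:loopidentity}.

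\textbf{Main obstacle.} The hard part is the sign bookkeeping in Steps 2–3: the orientation of each singlet, the closing edge $(1,2m)$ of $\widetilde D_1$, and the sign produced by each transpose. The strategy avoids computing the orientation-dependent sign $\theta$ directly. It only needs to verify carefully that all these signs are uniform in $\mathbf a$, apart from the $(-1)^m$ coming from the $m$ transposed nontrivial Paulis, and then calibrates the remaining constant against the already-established overlap $s_m=2^{1-m}$. The one point to check concretely is that the transposes fall on exactly one site per dimer of $D_0$; I would verify this first on $m=1$ and $m=2$.
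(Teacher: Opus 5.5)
Your proposal is correct and follows essentially the paper's route: you encode the singlet by $\varepsilon=iY$, contract the single loop into a $2\times 2$ trace with transposes on alternate sites, remove them with $\sigma^{T}\varepsilon=-\varepsilon\sigma$ and $\varepsilon^{2}=-I$, and conclude with \eqref{eq:ctrace}. The only difference is that you fix the $\mathbf a$-independent sign by calibrating against $\langle D_0,D_1\rangle=2^{1-m}$ (all insertions equal to $I$) instead of tracking it by hand; this is a worthwhile safeguard, because the closing edge $(1,2m)$ actually contributes $\varepsilon^{T}=-\varepsilon$ rather than the final $\varepsilon$ displayed in \eqref{eq:indexcontraction}, although the paper's stated outcome \eqref{eq:rawloop} is correct.
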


\begin{proof}
Let
\[
\varepsilon=
\begin{pmatrix}0&1\\-1&0\end{pmatrix}=iY = \ket 0\bra 1- \ket 1 \bra 0\,.
\]
so that
$\ket s=2^{-1/2}\sum_{u,v}\varepsilon_{uv}\ket{u,v}$.
This gives
\begin{align}
2^m\langle D_0,P_{\mathbf a}\widetilde D_1\rangle
=
\Tr\Big[&
(\sigma_{a_1}^{T}\varepsilon\sigma_{a_2})\varepsilon
(\sigma_{a_3}^{T}\varepsilon\sigma_{a_4})\varepsilon
\cdots
\notag\\[-1mm]
&\cdots
(\sigma_{a_{2m-1}}^{T}\varepsilon\sigma_{a_{2m}})
\varepsilon
\Big].
\label{eq:indexcontraction}
\end{align}
For each Pauli matrix,
\begin{equation}
\sigma_a^T\varepsilon=-\varepsilon\sigma_a,
\qquad
\varepsilon^T=-\varepsilon,
\qquad
\varepsilon^2=-I.
\label{eq:epsilonidentities}
\end{equation}
Substitution into \eqref{eq:indexcontraction}, followed by cyclicity of trace,
gives
\begin{equation}
\langle D_0,P_{\mathbf a}\widetilde D_1\rangle
=-2^{-m}\Tr(\sigma_{a_1}\cdots\sigma_{a_{2m}}).
\label{eq:rawloop}
\end{equation}
Multiplying by the phase $(-1)^{m-1}$ in \eqref{eq:positive-overlap} and using
\eqref{eq:ctrace},
\[
\langle D_0,P_{\mathbf a}D_1\rangle
=(-1)^m\, 2^{-m}\Tr(\sigma_{a_1}\cdots\sigma_{a_{2m}})
=2^{1-m}c_{\mathbf a}.
\]
This is \eqref{eq:loopidentity}.
\end{proof}

\subsection{The  lower bound}

For the remainder of the proof, abbreviate
\begin{equation}
H_{2m}:=H_{2m}^{(0)}.
\label{eq:Hshort}
\end{equation}

\begin{lemma}[Dimer matrix elements]
One has
\begin{align}
\langle D_0,H_{2m}D_0\rangle
&=3^m,
\label{eq:diag0}\\
\langle D_1,H_{2m}D_1\rangle
&=3^m,
\label{eq:diag1}\\
\langle D_0,H_{2m}D_1\rangle
&=2^{1-m} N_m,
\label{eq:offdiag}
\end{align}
where $N_m=(9^m+3)/4$.
\end{lemma}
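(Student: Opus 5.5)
The three identities all come from expanding $H_{2m}=\sum_{\mathbf a}c_{\mathbf a}P_{\mathbf a}$ as in \eqref{eq:Hcoeff} and computing the matrix elements of each Pauli string term by term.

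\textbf{Off-diagonal entry \eqref{eq:offdiag}.} This follows directly from the preceding lemma. By \eqref{eq:loopidentity},
\[
\langle D_0,H_{2m}D_1\rangle=\sum_{\mathbf a}c_{\mathbf a}\langle D_0,P_{\mathbf a}D_1\rangle=2^{1-m}\sum_{\mathbf a}c_{\mathbf a}^2 .
\]
Since $c_{\mathbf a}\in\{0,\pm1\}$, the sum $\sum_{\mathbf a}c_{\mathbf a}^2$ counts the words of length $2m$ whose quaternion product is scalar. By the scalar-word count lemma this number is $A_{2m}=N_m=(9^m+3)/4$.

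\textbf{Diagonal entry for $D_0$, equation \eqref{eq:diag0}.} Here I use that $D_0$ is a tensor product of singlets on the pairs $(2k-1,2k)$. Hence $\langle D_0,P_{\mathbf a}D_0\rangle$ factorizes as
\[
\prod_{k=1}^m\langle s,(\sigma_{a_{2k-1}}\otimes\sigma_{a_{2k}})s\rangle .
\]
For the singlet, $\langle s,\sigma_a\otimes\sigma_b\, s\rangle=-\delta_{ab}$. This can be checked directly, or from rotation invariance together with $\sigma_a\otimes\sigma_a\,s=-s$. So only the words with $a_{2k-1}=a_{2k}$ for all $k$ contribute, and each contributes $(-1)^m$.

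For such words the quaternion product is $\prod_k \ee_{a_{2k-1}}^2=(-1)^m$, so $c_{\mathbf a}=(-1)^m$. Each such term therefore contributes $(-1)^m(-1)^m=1$. There are $3^m$ such words, which gives \eqref{eq:diag0}.

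\textbf{Diagonal entry for $D_1$, equation \eqref{eq:diag1}.} The sign $(-1)^{m-1}$ in the definition of $D_1$ drops out of a diagonal matrix element, so it suffices to treat $\widetilde D_1$. This is the same singlet pairing shifted cyclically by one site, with pairs $(2,3),\dots,(2m-2,2m-1),(2m,1)$; the orientation of the pair $(1,2m)$ is irrelevant because $\langle s,\sigma_a\otimes\sigma_b s\rangle$ is symmetric. The coefficients of $H_{2m}$ are invariant under cyclic rotation of the sites, as noted after \eqref{eq:ctrace}, since the trace is cyclic. So the computation for $D_0$ transfers verbatim.

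As a direct check: the contributing words have $a_{2k}=a_{2k+1}$ and $a_{2m}=a_1$. Their product is $\ee_{a_1}(-1)^{m-1}\ee_{a_1}=(-1)^m$, which matches the singlet factor $(-1)^m$. This again gives $3^m$.

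\textbf{Main obstacle.} The only delicate point is sign bookkeeping. One must confirm that the singlet expectation is exactly $-\delta_{ab}$ with no extra phase for the pair $(1,2m)$, and that $c_{\mathbf a}=(-1)^m$ on the contributing words. I would settle both by the short $2\times2$ check $\sigma_a\otimes\sigma_a\ket s=-\ket s$, which holds because the singlet is antisymmetric and is annihilated by the total spin.
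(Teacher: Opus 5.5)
Your proposal is correct and follows the paper's proof essentially step for step. You use the singlet correlation $\langle s,\sigma_a\otimes\sigma_b\, s\rangle=-\delta_{ab}$, so that the $3^m$ paired words each contribute $(-1)^m(-1)^m=1$. You use cyclic invariance of the coefficients for $D_1$; your explicit check of the shifted pairing, and your remark that the phase and the orientation of the pair $(1,2m)$ drop out, are harmless extras. For the off-diagonal entry you use the loop identity together with the scalar-word count $A_{2m}=N_m$, as the paper does.
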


\begin{proof}
For a single singlet,
\begin{equation}
\langle s,\sigma_a\otimes\sigma_b\,s\rangle=-\delta_{ab}.
\label{eq:singletcorr}
\end{equation}
Thus a Pauli word contributes to the $D_0$ diagonal matrix element only when
$a_{2r-1}=a_{2r}$ for every $r$.  There are $3^m$ such words.  For each one,
the quaternion coefficient is $(-1)^m$, while the product of singlet
correlations is also $(-1)^m$; hence every surviving contribution equals $1$.
This proves \eqref{eq:diag0}.

The coefficient formula \eqref{eq:ctrace} is cyclically invariant.  Therefore
$H_{2m}$ is invariant under cyclic translation of the sites.  The state $D_1$
is, up to a global phase, the one-site translate of $D_0$, so
\eqref{eq:diag1} follows.

Finally, by \eqref{eq:Hcoeff} and the loop identity,
\[
\langle D_0,H_{2m}D_1\rangle
=
\sum_{\mathbf a}c_{\mathbf a}
\langle D_0,P_{\mathbf a}D_1\rangle
=
2^{1-m}\sum_{\mathbf a}c_{\mathbf a}^2
=2^{1-m} N_m.
\]
\end{proof}

\begin{theorem}[Quaternionic $XYZ$ product-state gap]
For every $m\geq1$, the $2m$-qubit, $2m$-homogeneous Hermitian operator
$H_{2m}$ defined in \eqref{eq:Hshort} satisfies
\begin{equation}
\norm{H_{2m}}_{\ps}=1
\label{eq:thmps}
\end{equation}
and
\begin{equation}
\norm{H_{2m}}_{\op}
\geq
L_m
:=
\frac{9^m+2\cdot 6^m+3}{2^{m+1}+4}.
\label{eq:Lm}
\end{equation}
\end{theorem}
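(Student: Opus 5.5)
The plan has two independent parts: an exact computation of $\norm{H_{2m}}_{\ps}$ through the quaternion factorization \eqref{eq:Rdef}, and a variational lower bound for $\norm{H_{2m}}_{\op}$ on the span of the two dimer states, using the Dimer matrix elements lemma.

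\emph{Product-state norm.} Take a product vector $E=e_1\otimes\cdots\otimes e_{2m}$. In \eqref{eq:Rdef} the factors $X_r\mathbf i+Y_r\mathbf j+Z_r\mathbf k$ act on distinct qubits, and the quaternion units are scalars with respect to the operator part. Hence the expectation factorizes:
\[
\langle \mathcal R_{2m}E,E\rangle=\prod_{r=1}^{2m}\big(a_r^x\mathbf i+a_r^y\mathbf j+a_r^z\mathbf k\big).
\]
Each factor is a pure quaternion with $|q_r|\le 1$, by the Bloch relation $(a^x_r)^2+(a^y_r)^2+(a^z_r)^2=1$. The quaternion norm is multiplicative, so the product has norm at most $1$. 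The scalar part of this product is exactly $\langle H_{2m}E,E\rangle$, which gives $\norm{H_{2m}}_{\ps}\le 1$.

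For equality, take every $e_r=\ket 0$. Then $q_r=\mathbf k$, the product equals $\mathbf k^{2m}=(-1)^m$, and $|\langle H_{2m}E,E\rangle|=1$. This proves \eqref{eq:thmps}.

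\emph{Operator norm.} I will use the test vector $\psi=D_0+D_1$ and bound $\norm{H_{2m}}_{\op}\ge \langle\psi,H_{2m}\psi\rangle/\norm{\psi}^2$. The only point needing care is that the off-diagonal element is real. By \eqref{eq:offdiag} it equals $2^{1-m}N_m$, which is real, and Hermiticity then gives $\langle D_1,H_{2m}D_0\rangle=\overline{2^{1-m}N_m}=2^{1-m}N_m$ as well. The overlap $s_m=2^{1-m}$ from \eqref{eq:positive-overlap} is also positive and real. Therefore
\[
\langle\psi,H_{2m}\psi\rangle=2\cdot 3^m+2\cdot 2^{1-m}N_m,\qquad \norm{\psi}^2=2+2\cdot 2^{1-m}.
\]

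The remaining step is algebra. Multiplying numerator and denominator by $2^{m-1}$ gives
\[
\frac{6^m+2N_m}{2^m+2}.
\]
Substituting $2N_m=(9^m+3)/2$ from \eqref{eq:Nm} gives $(9^m+2\cdot6^m+3)/(2^{m+1}+4)=L_m$, which is \eqref{eq:Lm}.

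I expect no real obstacle, since the heavy lifting (the loop identity and the count $N_m$) was done in the preceding lemmas. The points to state explicitly are the factorization of the quaternion-valued expectation over a product state, and the reality and sign of the off-diagonal terms. The latter is what fixes the choice $\psi=D_0+D_1$ rather than $D_0-D_1$.
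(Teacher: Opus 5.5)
Your proposal is correct and follows the paper's proof. The bound $\norm{H_{2m}}_{\ps}\le 1$ comes from factorizing $\langle\mathcal R_{2m}E,E\rangle$ into a product of pure quaternions of norm at most one, and the operator-norm bound is the Rayleigh quotient of $D_0+D_1$ computed from the dimer matrix elements and the overlap $2^{1-m}$. You also make explicit two points the paper leaves implicit: the value $1$ is attained at $\ket{0}^{\otimes 2m}$ (where the product is $\mathbf k^{2m}=(-1)^m$), and the off-diagonal entries and overlap are real and positive, which justifies the plus sign in $D_0+D_1$.
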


\begin{proof}
Equation \eqref{eq:thmps} has been shown before.  For the operator norm,
use the  combination dimer vector
\begin{equation}
\ket{\Psi_m}:=\ket{D_0}+\ket{D_1}.
\label{eq:Psi}
\end{equation}
By \eqref{eq:positive-overlap},
\[
\langle\Psi_m,\Psi_m\rangle=2(1+2^{1-m}).
\]
The dimer matrix elements give
\[
\langle\Psi_m,H_{2m}\Psi_m\rangle
=2(3^m+2^{1-m} N_m),
\]
where $N_m=(9^m+3)/4$.
This therefore yields
\begin{equation}
\norm{H_{2m}}_{\op}
\geq
\frac{3^m+2^{1-m}\,N_m}{1+2^{1-m}}.
\label{eq:rayleighintermediate}
\end{equation}
Substitute and $N_m=(9^m+3)/4$:
\[
\frac{3^m+2^{1-m}(9^m+3)/4}{1+2^{1-m}}
=
\frac{9^m+2\cdot6^m+3}{2^{m+1}+4}> (9/2)^m =(3/\sqrt{2})^d,
\]
if $2m=d$.
\end{proof}

$$
3/\sqrt{2} = 2.121320...
$$

\section{Non-commutative Bohnenblust--Hille inequality and the tightness of its constant}
\label{ncBH-s}

For hamiltonian $H=\sum_\alpha c_\alpha \sigma^\alpha$ of degree at most $d$, that is for those for which $c_\alpha=0$ as long as multi-index $\alpha \in \{0,1,2,3\}^n$
satisfies $|\text{supp}\alpha| >d$ the following estimate was  proved in \cite{VZ}:
\begin{equation}
\label{ncBH}
\Big(\sum_\alpha |c_\alpha|^{\frac{2d}{d+1}}\Big)^{\frac{d+1}{2d}} \le C(d) \|H\|_\op,
\end{equation}
where $C(d) \le 3^d$ is independent of the number of qubits $n$. In \cite{BSVZ} this estimate  for homogeneous hamiltonians of degree $d$ was improved to
\begin{equation}
\label{bsvz}
C(d) \le 3^{d/2} BH^d_{\pm} \le 3^{d/2+ c\sqrt{d\log d}} = 3^{\frac{d}2(1+o_d(1))}\,.
\end{equation}

Now we will repeat the proof of \eqref{bsvz} and show that it is practically tight by using \cite{S}.
\begin{remark}
Let us remind the reader that the order of magnitude of BH constant $BH^d_{\pm}$ is still unknown and its separation between
$BH^d_{\pm}\le e^{c\sqrt{d\log d}} $  and $\le d^C$ will play an important part in partially clarifying Aaronson--Ambainis conjecture.
In \cite{S} it is shown that non-commutative constant $BH^d_{M(\bC^2)}$ {\bf must be exponential} in $d$. We borrow \cite{S} proof and write it down below.
\end{remark}

First let us prove \eqref{bsvz}. To this end consider scenarios, where scenario is just a map
$$
s: [n]\to \{1,2,3\}^n\,. 
$$
Consider the part of homogeneous $H$ controlled by scenario:
$$
H_s=\sum_{\alpha \le s} c_\alpha\, \sigma^\alpha,
$$
where $\alpha\le s$ means that for every $j\in [n]$ $\alpha_j =s(j)$ or $\alpha_j=0$.

Now let us notice that every $H_s$ can be made a Hamiltonian in which only $X=\sigma^{(1)}= X$ Pauli's are involved just by a local unitary, that is by 
conjugating with $U_1\otimes \dots\otimes U_n$ (all $U_j$ are unitaries in corresponding qubit $\bC^2$).
But this mean that every $H_s$ is commutative, and 
\begin{equation}
\label{LP}
\|H_s\|_\op =\|h_s\|_\infty,
\end{equation}
where $h_s(x)$ is obtained by replacing each $X_j$ by $x_j \in \{\pm 1\}$. This is clear by the fact of simultaneous diagonalization of all monomials,
see \cite{BLP}.

Hence
$$
\Big(\sum_{\alpha\le s} |c_\alpha|^{\frac{2d}{d+1}}\Big)^{\frac{d+1}{2d}} \le BH^d_{\pm} \|H_s\|_\op \le e^{c\sqrt{d\log d}}\|H_s\|_\op\,.
$$
Now we will add these inequalities keeping in mind that each $c_\alpha$ is met in exactly $3^{n-d}$ scenarios $s$.

\bigskip

Let $e^{\alpha}_x$, $\alpha \in \{1,2,3\}, x\in \{-1,1\}$ denote the eigenvector of Pauli $\sigma^\alpha$ with eigenvalue $x$.
The useful formula is
$$
\ketbra{e^{\alpha}_{x}}{e^{\alpha}_{x}} =\frac12 \sigma^{(0)} + \frac12 \, x\, \sigma^\alpha\,.
$$
First we write a formula for $H_s$: let $x\in \{-1,1\}^n$, put
\begin{equation}\label{r-s}
\rho_{x, s}
:=\ketbra{e^{s(1)}_{x_1}}{e^{s(1)}_{x_1}}\otimes\dots\otimes  \ketbra {e^{s(n)}_{x_n}}{e^{s(n)}_{x_n}} 
=\Big(\frac12\sigma_0+ \frac12 x_1 \sigma_{s(1)}\Big)\otimes\dots\otimes  \Big(\frac12\sigma_0+ \frac12 x_n \sigma_{s(n)}\Big),
\end{equation}

For any $s : [n]\to \in[3]^n$, we have the formula for $H_s$:
\begin{equation}\label{defn:es}
H_{s}=\mathcal E_s (H):=\sum_{x\in\{-1,1\}^n}\rho_{x,s}H\rho_{x,s}.
\end{equation}

\bigskip

The operator $\mathcal E_s (H)$ is  completely positive, $\mathcal E_s^2= \mathcal E_s$ and it is unital. Therefore it is
the conditional expectation onto the commutative sub-algebra $\mathcal{H}_{s}$ generated by 
$$
\un\otimes \cdots \otimes \sigma_{s(j)}\otimes  \cdots \otimes \un,\qquad j\in [n]
$$
where $\sigma_{s(j)}$ appears in the $j$-th place, see \cite{DR}, \cite{T}.
It also is related to the $n$-fold tensor product of the $1$-qubit depolarizing channel with parameter $1/3$ employed by \cite{BGKT} via averaging over $s$'s.

The only thing we need is a slightly non-trivial inequality
\begin{equation}
\label{norm1}
\|H_s\|_\op \le \|H\|_\op\,.
\end{equation}

\begin{align*}
&\sum_\alpha |c_\alpha|^{\frac{2d}{d+1}}= 3^{d-n} \sum_s \sum_{\alpha\le s} |c_\alpha|^{\frac{2d}{d+1}}\le
\\
& 3^{d-n} \sum_s \Big(BH^d_{\pm} \, \|H_s\|_\op\Big)^{\frac{2d}{d+1}} \le 3^{-n} \sum_s \Big(3^{\frac{d+1}{2}}BH^d_{\pm} \, \|H\|_\op\Big)^{\frac{2d}{d+1}},
\end{align*}
which immediately gives the estimate \eqref{bsvz}: $C(d) \le 3^{\frac12(d+o_d(1))}$.

\bigskip

We already noticed a non-trivial inequality \eqref{norm1}. 

Now consider operator $D$ on matrices that take a $2^n\times 2^n$ matrix and sends to its diagonal part.

$$
H\to D(H)\,.
$$
Obviously it is bounded in operator norm. It is also clear that any monomial $\sigma^\alpha$ having at least on $\sigma^{(1)}=X$ or $\sigma^{(2)}=Y$ will satisfy $D(\sigma^\alpha)=0$.
Only monomials having only $\sigma^{(3)}=Z$ survive.

So, restricting to diagonal send any Hamiltonian $H$ to its scenario $H_{t}$, where $t:[n] \to \{3\}$.

\medskip

For example, let
$$
U=\frac1{\sqrt{2} }\begin{pmatrix}
&1, &1\\
&-1, &1
\end{pmatrix},
$$
and 
$$
C(H) = U\otimes \dots \otimes U H U^*\otimes \dots \otimes U*,
$$
Then $C^{-1}D C$ will map any Hamiltonian $H$ into $H_s$, where $s:[n]\to \{1\}$.

All transformations here are bounded in operator norm. Any other scenarios hamiltonian $H_s$ can be obtained from $H_t$ by the conjugation with corresponding 
$U_1\otimes \dots \otimes U_n$, where $U_j$  is $2\times 2$ unitary on $j$-th qubit.

\subsection{The tightness of exponential estimate for non-commutative BH}
We repeat the construction of \cite{S}.
Let $k+r=d$ and we split $d$ is approximately in half (or exactly if $d$ is even).
First we construct $3^r$ Pauli strings $\{P_1,\dots, P_r\}$ on $n=\frac{3^r-1}{2} $ qubits that have properties:

1) strings $P_1,\dots, P_r$ are pairwise anti-commuting;

2) the degree (weight) of each string is $r$.

\medskip

This was achieved in \cite{JKMN}. Here is a simple explanation of their construction. For $r=1$ we put $X, Y, Z$ in a column, one qubit each.
Now for $r=2$ we have $4$ qubits and should have $9$ strings:
complement the $X, Y, Z$ column by only $I$ in the next $2$ qubits, we have now $3\times 3$ matrix. Under it put another $3\times 3$ matrix and
fill it in by $I $'s in the first qubit, $X, Y, Z$ column in the second qubit and $I$'s in the third qubit.
Now put under the obtained $6\times 3$ matrix another $3\times 3$ matrix and fill its first qubits by $I$'s and fill the third qubit by $X, Y, Z$ column.

\medskip

After this we have $9\times 3$ matrix and we still have only $3$ qubits used. To fill the fourth qubit we put $X, X, X$ column in the first three rows in the fourth qubit and then $Y, Y, Y$ column in the next three rows in the fourth qubit, and finally, put column $Z, Z, Z$ in the last three rows in the fourth qubit. The construction of $3^2$ Pauli rows on $4=(3^2-1)/2$ qubits is finished.

One easily checks that they all anti-commute and the weight (degree) of each is $r=2$.

We call this $9\times 4$ matrix  the essential block $E_2$ for $r=2$. The essential block $E_1$ for $r=1$ was just column $X, Y, Z$.

\medskip

Now we just repeat the construction for $r=3$ in a self-similar fashion. We
put $E_2$ in the left upper corner, and next to it two $9\times 4$ blocks filled by $I$'s, next we
put another $9\times 4$ block filled by $I$'s just below $E_2$, next to it we put a copy of $E_2$, and next to it another $9\times 4$ block filled by $I$'s.
Below all that we put another $9\times 4$ block filled by $I$'s and yet another $9\times 4$ block filled by $I$'s, and then in the qubits nine to twelve a copy of $E_2$. We filled $27$ rows and $12$ qubits. For $r=3$ the number of qubits is $n=(3^3-1)/2=13$. So we have one extra last qubit.

\medskip

We put in it the column of $X$'s of hight $9$ in the first nine rows, in the next nine rows we put a column of $Y$'s of height $9$, and in the last nine rows in the thirteenth qubit we put the column of $Z$'s.

We filled in the matrix $27\times 13$, call it essential block $E_3$ for $r=3$. It is obvious to check that all rows anti-commute.

\medskip

To build $81\times 40$ essential block $E_4$ we just repeat this in a self similar fashion using $E_3$ as we just used $E_2$. Et cetera...

\bigskip

So these were $P$ strings. Consider now a totally different collection of Pauli string. We will call them $B$ strings. Let $L=2^k$ be the largest smaller than $3^r$.

Let $L$ be the largest power of two below $3^r$.  Choose $L$ elements of the family in of $P_j$ above and denote them
\[
 P_1,\ldots,P_L.
\]
Thus
\[
 P_\ell^2=\Id,
 \qquad
 P_\ell P_m=-P_mP_\ell\quad(\ell\ne m),
 \qquad
 \wt(P_\ell)=r.
\]

Let $H\in\{\pm1\}^{L\times L}$ be a Sylvester Hadamard matrix:
\[
 H^\mathsf TH=L I_L.
\]
The suffix register has $k$ blocks of $L$ qubits.  For $s\in[k]$ and $a\in[L]$, let $Z_{s,a}$ be Pauli $Z$ on coordinate $a$ in block $s$.  Introduce the operator-valued diagonal matrices
\[
 \mathsf D_s=\operatorname{diag}(Z_{s,1},\ldots ,Z_{s,L}).
\]
All their entries commute.  Define the column vector of suffix operators by
\begin{equation}\label{eq:B-vector}
 \boxed{
 \begin{pmatrix}B_1\\ \vdots\\ B_L\end{pmatrix}
 =H\mathsf D_1H\mathsf D_2\cdots H\mathsf D_k\un.
 }
\end{equation}
For $k=0$, set $B_\ell=\Id$.

Matrix $H\mathsf D_1H\mathsf D_2\cdots H\mathsf D_k$ is $L2^{kL}\times L2^{kL}$, and $\un$ is a $L2^{kL}\times 2^{kL}$ tall matrix consisting of 
$2^{kL}\times 2^{kL}$ identity matrices.

\subsection{The sum-of-squares identity}

\begin{lemma}[Hadamard suffix identity]\label{lem:B-square}
The operators in \eqref{eq:B-vector} satisfy
\[
 \sum_{\ell=1}^LB_\ell^2=L^{k+1}\Id.
\]
\end{lemma}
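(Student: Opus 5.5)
We argue by induction on $k$, using that all entries of the matrices $\mathsf D_s$ commute and square to $\Id$, and that $H$ is a real scalar matrix with $H^\mathsf TH=LI_L$. Define the column vectors of operators
\[
 v^{(k)}:=H\mathsf D_1H\mathsf D_2\cdots H\mathsf D_k\un ,
\]
so that $v^{(k)}=H\mathsf D_1\,w$, where $w$ is the analogous vector built from the last $k-1$ blocks, $w=H\mathsf D_2\cdots H\mathsf D_k\un$. The quantity to compute is $\sum_\ell B_\ell^2=(v^{(k)})^\mathsf T v^{(k)}$, where the transpose acts only on the $L\times L$ scalar/operator-matrix structure and the products of entries are operator products. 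The base case $k=0$ is immediate: $B_\ell=\Id$ for all $\ell$, so the sum equals $L\Id=L^{1}\Id$.

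For the inductive step we expand
\[
 (v^{(k)})^\mathsf T v^{(k)}=w^\mathsf T\mathsf D_1^\mathsf T H^\mathsf T H\mathsf D_1 w .
\]
Since $H$ has scalar entries, it commutes with every operator entry, so $H^\mathsf TH=LI_L$ may be pulled out, giving $L\,w^\mathsf T\mathsf D_1\mathsf D_1 w$. Now $\mathsf D_1$ is diagonal with entries $Z_{1,a}$, so $\mathsf D_1\mathsf D_1=\operatorname{diag}(Z_{1,a}^2)=I_L\otimes\Id$. Hence $(v^{(k)})^\mathsf T v^{(k)}=L\,w^\mathsf T w$. The vector $w$ has exactly the form of $v^{(k-1)}$, only with the blocks relabeled (its entries live on blocks $2,\dots,k$). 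The inductive hypothesis therefore gives $w^\mathsf Tw=L^{k}\Id$, and we obtain $L^{k+1}\Id$.

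The one step requiring care is the legitimacy of these matrix manipulations with operator entries: the product $(v)^\mathsf T v=\sum_\ell v_\ell v_\ell$ must be computed with operator products in the correct order. The point is that every entry of every $\mathsf D_s$ is a $Z$ on some qubit, so all these entries lie in one commutative algebra, and the entries of $w$ lie in that same algebra. Matrices over a commutative ring obey the usual rules, including associativity and $(AB)^\mathsf T=B^\mathsf TA^\mathsf T$. So I would first state that $B_\ell$ and all intermediate quantities belong to the commutative algebra generated by the $Z_{s,a}$, and only then carry out the computation above; the transpose rule and the cancellation $Z^2=\Id$ are then justified. Hermiticity is not needed, although each $B_\ell$ is in fact a real combination of $Z$-strings. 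Note also that the tall matrix $\un$ contributes $\un^\mathsf T\un=L\Id$, which is exactly the base case.
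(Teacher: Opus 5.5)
Your proof is correct and is essentially the paper's argument: the paper writes $\sum_\ell B_\ell^2=\mathbf B^\mathsf T\mathbf B$ and telescopes by repeatedly cancelling $H^\mathsf TH=LI_L$ and $\mathsf D_s^2=I$ from the outside in, ending at $L^k\un^\mathsf T\un=L^{k+1}\Id$. This is exactly your induction that peels off $H\mathsf D_1$ at each step, and your explicit remark that all entries lie in the commutative algebra generated by the $Z_{s,a}$ (so the transpose rule applies) is the same justification the paper gives more briefly.
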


\begin{proof}
Write $\mathbf B=(B_1,\ldots , B_L)^\mathsf T$.  Since all operator entries commute and each $\mathsf D_s$ is a self-adjoint involution,
\begin{align*}
 \sum_{\ell=1}^LB_\ell^2
 &=\mathbf B^\mathsf T\mathbf B\\
 &=\un^\mathsf T\mathsf D_kH^\mathsf T\cdots
   \mathsf D_1H^\mathsf TH\mathsf D_1\cdots H\mathsf D_k\un\\
 &=L\,\un^\mathsf T\mathsf D_kH^\mathsf T\cdots
   \mathsf D_2H^\mathsf TH\mathsf D_2\cdots H\mathsf D_k\un\\
 &=\cdots=L^k\un^\mathsf T\un\,\Id=L^{k+1}\Id.
\end{align*}
At each step we use $H^\mathsf TH=LI$ and $\mathsf D_s^2=I$.
\end{proof}

Set
\[
 H_d=\sum_{\ell=1}^LP_\ell\otimes B_\ell,
 \qquad
 N_d=L^{k+1},
 \qquad
 O_d=N_d^{-1/2}H_d.
\]
The total number of qubits is
\[
 n_d=\frac{3^r-1}{2}+kL.
\]

\begin{theorem}[Flat homogeneous quantum Boolean function]\label{thm:flat}
The operator $O_d$ is a balanced quantum Boolean function.  It has exactly $N_d$ nonzero Pauli coefficients, every one of magnitude $N_d^{-1/2}$, and every supported Pauli string has weight exactly $d$.
\end{theorem}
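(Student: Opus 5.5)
We first show that $O_d$ is a quantum Boolean function, i.e.\ $O_d^2=\Id$ (it is Hermitian, being a real combination of Pauli strings). Since the $P_\ell$ act on the prefix register and the $B_\ell$ on the suffix register, the tensor factors commute and
\[
H_d^2=\sum_{\ell}P_\ell^2\otimes B_\ell^2+\sum_{\ell\ne m}P_\ell P_m\otimes B_\ell B_m .
\]
The $B_\ell$ are polynomials in the commuting $Z_{s,a}$, so $B_\ell B_m=B_mB_\ell$. Combined with $P_\ell P_m=-P_mP_\ell$, the cross terms cancel in pairs. Using $P_\ell^2=\Id$ and Lemma~\ref{lem:B-square}, we get $H_d^2=\Id\otimes\sum_\ell B_\ell^2=L^{k+1}\Id=N_d\Id$, hence $O_d^2=\Id$.

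\emph{Balance.} Every supported string $P_\ell\otimes(\cdot)$ has nontrivial prefix of weight $r\ge1$, so it is traceless. Therefore $\Tr O_d=0$.

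Next we expand $B_\ell$ explicitly. Writing out the product $H\mathsf D_1H\mathsf D_2\cdots H\mathsf D_k\un$ entrywise gives
\[
B_{\ell}=\sum_{a_1,\dots,a_k\in[L]}H_{\ell a_1}H_{a_1a_2}\cdots H_{a_{k-1}a_k}\,Z_{1,a_1}Z_{2,a_2}\cdots Z_{k,a_k}.
\]
This is a sum over $L^k$ paths. Each monomial $Z_{1,a_1}\cdots Z_{k,a_k}$ uses exactly one qubit in each of the $k$ blocks, so it has weight $k$. Different paths $(a_1,\dots,a_k)$ give different Pauli strings, and each coefficient is a product of Hadamard entries, hence lies in $\{\pm1\}$.

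From this expansion the counting follows. $H_d=\sum_{\ell}\sum_{\mathbf a}(\pm1)\,P_\ell\otimes Z_{1,a_1}\cdots Z_{k,a_k}$. The $P_\ell$ are distinct strings: they pairwise anticommute, so no two are equal. The $Z$-monomials are distinct across paths. Hence the strings $P_\ell\otimes Z_{\mathbf a}$ are pairwise distinct, and no cancellation occurs among the $L\cdot L^k=N_d$ terms. Each has coefficient $\pm1$ in $H_d$, so $\pm N_d^{-1/2}$ in $O_d$. Each has weight $r+k=d$.

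As a consistency check, Parseval gives $\sum|c|^2=N_d\cdot N_d^{-1}=1=\|O_d\|_{HS}^2$, matching $O_d^2=\Id$.

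The main point requiring care is the distinctness claim, i.e.\ that no two terms collide and cancel. Collisions are ruled out by the product structure: the prefix identifies $\ell$, and the positions of the $Z$'s identify the path. The other step needing care is the commutation of the $B_\ell$, which relies on their being built only from $Z$'s; this is exactly what makes the anticommutation of the $P_\ell$ kill the cross terms.
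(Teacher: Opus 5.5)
Your proof is correct and follows essentially the same route as the paper. The cross terms in $H_d^2$ cancel by anticommutation of the $P_\ell$ and commutation of the $B_\ell$, and Lemma~\ref{lem:B-square} then gives $H_d^2=N_d\Id$. The path expansion of $B_\ell$, together with distinctness of the $P_\ell$, yields $N_d$ non-colliding $\pm1$ terms of weight $r+k=d$ with no identity term.

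Your indexing of the path variables by $a_s\in[L]$ is actually the more careful bookkeeping. The paper's expansion instead writes indices in $\{0,1\}^k$, which conflates the number of blocks $k$ with $\log_2 L$, and these are different parameters of the construction.
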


\begin{proof}
The cross terms in $H_d^2$ cancel:
\begin{align*}
 H_d^2
 &=\sum_\ell \Id\otimes B_\ell^2
   +\sum_{\ell<m}(P_\ell P_m+P_mP_\ell)\otimes B_\ell B_m\\
 &=\Id\otimes\sum_\ell B_\ell^2=N_d\Id,
\end{align*}
where we used anti-commutation of the $P_\ell$ and commutation of the $B_\ell$.  Hence $O_d^2=\Id$.

\medskip

Let us write the formula for $B_\ell$. We can think that index $\ell\in [L]=[2^k]$ is written as an element of $\{0,1\}^k$.
\begin{equation}
\label{eq:B-expanded}
B_\ell =\sum_{a_1,\dots, a_k\in \{0,1\}^k} (-1)^{\ell\cdot a_1}(-1)^{a_1\cdot a_2}\dots (-1)^{a_{k-1}\cdot a_k} Z_{1, a_1}\dots Z_{k, a_k}
\end{equation}

\medskip

For each $\ell$, formula \eqref{eq:B-expanded} contains $L^k$ distinct  strings.  Different values of $\ell$ have different core strings $P_\ell$, so there are no collisions between the corresponding full Pauli strings.  Thus $H_d$ has $L^{k+1}=N_d$ coefficients of magnitude one.  Normalization gives magnitude $N_d^{-1/2}$.  Every term has core weight $r$ and suffix weight $k$, hence total weight $d=r+k$.  Since $d\ge1$, there is no identity term.
\end{proof}

Now $H_d=\sum_{\ell=1}^L P_\ell\otimes B_\ell$ has norm $L^{\frac{k+1}{2}}\asymp 3^{r\frac{k+1}{2}}= 3^{d^2/8}$.

\medskip

On the other hand all strings $P_\ell\otimes B_\ell$ are have different supports (because any two $P$-strings have different supports)  and there are exactly $ L^{k+1}$ of them. All coefficients of $H_d$ are $\pm 1$.
Therefore measuring $\|\hat H_d\|_{\ell^p}$ returns $L^{\frac{k+1}{p}}$. 

Therefore the ratio
$$
\frac{\|\hat H\|_{\ell^p}}{\|H\|_\op} = L^{(k+1)(1/p-1/2)} = L^{\frac{k+1}{2d}} \ge 3^{d/8},
$$
if $p=\frac{2d}{d+1}$. Non-commutative BH constant must be exponential in degree.

\section{Discussion of $P-B$ construction above}
\label{disc}

There is a lingering feeling that the previous constructions can be useful for other quantum estimates, mainly for disproving the quantum analogs of classical commutative results.

One problem is the already mentioned {\bf Aaronson--Ambainis conjecture}.
In commutative world (where it was stated) it looks like that: given $f:\{-1,1\}^n\to \bR$,  $\|f\|_\infty \le 1$, $\text{deg} f\le d$, is it true that there are two absolute constants $C, K$ such that
\begin{equation}
\label{AA}
\sum_{S\subset [n], S\neq \emptyset} \hat f(S)^2 =:\Var[f]  \le C d^K \text{maxInf}[f]?
\end{equation}

\begin{remark}
Sometimes it is written slightly differently by raising $\Var$ also to power $K$, but this is not essential.
\end{remark}

Here $\text{maxInf}[f]=\max_{j=1}^n \Inf_j[f]$ and 
$$
\Inf_j[f] =\bE|D_j f|^2 =\sum_{S\subset [n], j\in S} \hat f(S)^2
$$ 
is the influence of $j$-th coordinate.  Aaronson--Ambainis conjecture claims the existence (in a very precise sense) of an influential variable for any polynomial of degree $d$ on Hamming cube that is globally bounded by $1$.

\medskip 
This is still open, and the quantum version states the following:
given traceless hamiltonian $H$,  $\|H\|_\op \le 1$, $\text{deg} \,H\le d$, is it true that there are two absolute constants $C, K$ such that
\begin{equation}
\label{AAQ}
\Var[H] := \| H\|_{HS}^2 \le C d^K \text{maxInf}[f]?
\end{equation}
Here $\text{maxInf}[H]=\max_{j=1}^n \Inf_j[H]$ and 
$$
\Inf_j[H] =\|D_j H\|_{HS}^2\,.
$$ 
The Hilbert--Schmidt norm (HS) is normalized, namely, as we are in $\bC^{2^n}$ it is
$$
\|A\|_{HS}^2 := 2^{-n} \tr [A^*A]\,.
$$
In particular, unusually
$$
\|A\|_{HS} \le \|A\|_{\op}\,.
$$
Classical {\bf Aaronson--Ambainis conjecture}  \eqref{AA} is open. There is a hope to show that its quantum version \eqref{AAQ} can have a more or less easy counterexample.
 However, below we prove quantum Aaronson--Ambainis conjecture for a class of very non-commutative boolean functions.
 
 In the scalar case the Aaronson--Ambainis conjecture is proved for all boolean functions, see \cite{OSSS}, \cite{Z}.

\bigskip

\section{Disproving quantum Entropy-Influence conjecture}
\label{EICdis}

Another commutative problem on Hamming cube which got a lot of attention, but is still open is {\bf Entropy-Influence conjecture (EIC)}.

Again let $f=\sum_{S\subset [n]} \hat f(S) \xi_S$ be a polynomial of degree at most $d$ on Hamming cube. But now we also assume that it is boolean, $f:\{-1,1\}^n\to \{-1,1\}$.  Then EIC requires to prove that there exists an absolute constant such that
\begin{equation}
\label{EIC}
\text{Ent}[\hat f]:=\sum_{S\subset [n], S\neq \emptyset} \hat f(S)^2\cdot \log\frac{1}{\hat f(S)^2} \le C\, \Inf[f],
\end{equation}
where $\Inf[f] :=\sum_{j=1}^n \Inf_j[f] =\sum_{S\subset [n]} |S|\hat f(S)^2$.

\bigskip

Quantum boolean functions (hamiltonians) are such $H$ that
$$
\|H\|_{HS} =\|H\|_\op\,.
$$
There are many interesting things about them, \cite{MO}. 

Next we disprove the quantum version of \eqref{EIC}.


\subsection{Disproving quantum Entropy-Influence conjecture \eqref{EIC}}
Bu, Garcia, Jaffe, Koh, and Li  \cite{BuEtAl} conjectured that a universal constant $C$ should satisfy
\begin{equation}\label{eq:qfei}
 H(O)\le C I(O)
\end{equation}
for every qubit quantum Boolean function $O$ \cite[Conjecture 24]{BuEtAl}.

Let us normalize the hamiltonian $H_d$ constructed using $P-B$ method above. Consider
$$
O_d =\frac{1}{L^{\frac{k+1}{2}}} H_d\,.
$$
Notice that 
$$
\|O_d\|_{HS} =\|O_d\|_\op
$$
Then all Fourier coefficients of $O_d$ are $\pm \frac{1}{L^{\frac{k+1}{2}}} $ and there are exactly $L^{\frac{k+1}{2}} $ of them. 
Therefore, 
\begin{equation}
\label{EntO}
\text{Ent}[O_d] = \log L^{\frac{k+1}{2}} \asymp d^2\,.
\end{equation}

\medskip

But for any boolean function (quantum or classical)
$$
\Inf [F]  \le \text{deg}(F) \|F\|_{HS}^2\,.
$$
So for $O_d$ we have 
\begin{equation}
\label{InfO}
\Inf[O_d] \le d\,.
\end{equation}

Therefore, comparing \eqref{EntO} and \eqref{InfO}, we see that  there is no absolute constant $C$ such that
$$
\text{Ent}[H] \le C\, \Inf[H]
$$
for quantum boolean hamiltonian $H$.
 
 \subsection{Another counterexample to quantum Entropy-Influence conjecture using weighted Rudin--Shapiro tensor construction}
 
 In \cite{GS} Gideon Schechtman gave an example of a complex function on boolean cube with values in the unit circle such that Entropy-Influence conjecture fails.
 We first indicate his approach.
 
 \subsection{A weighted scalar Rudin--Shapiro phase family}
 \label{wRS}

We now turn to a different, commutative realization.  Fix weights
$a_1,\dots,a_n\in(0,1]$, set $\mathsf P_0=\mathsf Q_0=1$, and define real
functions on $\{-1,1\}^k$ by
\begin{align}
\mathsf P_k&=\mathsf P_{k-1}+a_kx_k\mathsf Q_{k-1},\label{eq:scalar-P}\\
\mathsf Q_k&=-a_kx_k\mathsf P_{k-1}+ \mathsf Q_{k-1}.
\label{eq:scalar-Q}
\end{align}
A direct calculation gives
\begin{equation}
\mathsf P_k^2+\mathsf Q_k^2=(1+a_k^2)
(\mathsf P_{k-1}^2+\mathsf Q_{k-1}^2),
\end{equation}
so
\begin{equation}
\mathsf P_n(x)^2+\mathsf Q_n(x)^2
=2\prod_{j=1}^n(1+a_j^2).
\label{eq:scalar-complement}
\end{equation}
Define
\[
D_n=\sqrt{2\prod_{j=1}^n(1+a_j^2)},\qquad
f_n(x)=\frac{\mathsf P_n(x)+i\mathsf Q_n(x)}{D_n}.
\]
Then $|f_n(x)|=1$ for every $x$.

For $A\subseteq[n]$, put $a_A=\prod_{j\in A}a_j$.  There are signs
$\alpha_A,\beta_A\in\{\pm1\}$ such that
\[
\widehat{\mathsf P_n}(A)=\alpha_Aa_A,\qquad
\widehat{\mathsf Q_n}(A)=\beta_Aa_A,
\]
and therefore
\[
\widehat f_n(A)=\frac{a_A}{D_n}(\alpha_A+i\beta_A).
\]
With
\[
p_j=\frac{a_j^2}{1+a_j^2},
\]
the squared Fourier coefficients form the product measure
\begin{equation}
|\widehat f_n(A)|^2=
\prod_{j\in A}p_j\prod_{j\notin A}(1-p_j).
\label{eq:product-law}
\end{equation}
Consequently,
\begin{equation}
\Inf[f_n]=\sum_{j=1}^n p_j,
\qquad
\Hf[f_n]=\sum_{j=1}^n\hbin(p_j).
\label{eq:scalar-HI}
\end{equation}
Writing $a_j=\tan\theta_j$ gives $p_j=\sin^2\theta_j$; the angles are
classical coefficient parameters before they acquire a Pauli interpretation.

\subsection{Towards the quantum case: the Hermitian dilation mechanism}

The following construction works for every unimodular complex function, not
only the family above.  For
\[
f(x)=\sum_{A\subseteq[n]}\widehat f(A)\chi_A(x),
\]
replace $\chi_A$ by the commuting Pauli string
$X_A=\bigotimes_{j=1}^nX_j^{\mathbf1_{j\in A}}$ and define
\begin{equation}
F_f=\sum_A\widehat f(A)X_A=f(X_1,\dots,X_n).
\label{eq:Ff}
\end{equation}
In the common $X$-eigenbasis, $F_f\ket x_X=f(x)\ket x_X$.  Hence
\[
|f(x)|=1\ \forall x\quad\Longleftrightarrow\quad F_f\text{ is unitary}.
\]

\medskip
Write $F_f=A+iB$ with $A,B$ Hermitian,
add one ancilla qubit and set:

\begin{equation}
O_f=X_0\otimes\Re F_f-Y_0\otimes\Im F_f
=\sum_A\bigl(
\Re\widehat f(A)X_0X_A-
\Im\widehat f(A)Y_0X_A
\bigr).
\label{eq:pauli-dilation}
\end{equation}

\bigskip 

This operator can be written in matrix form (ancilla qubit doubles the dimension):
\begin{equation}
O_f=\begin{pmatrix}0&F_f\\F_f^\dagger&0\end{pmatrix}.
\label{eq:dilation}
\end{equation}
Then $O_f^\dagger=O_f$ and
\[
O_f^2=\begin{pmatrix}F_fF_f^\dagger&0\\0&F_f^\dagger F_f\end{pmatrix}=I.
\]
Thus $O_f$ is a Hermitian quantum Boolean function.

For the weighted Rudin--Shapiro phase,
\begin{equation}
O_n=\frac{X_0\otimes\mathsf P_n(X)-Y_0\otimes\mathsf Q_n(X)}{D_n}.
\label{eq:On}
\end{equation}
The data operators commute because they are polynomials in the commuting family
$X_1,\dots,X_n$.  Therefore \eqref{eq:scalar-complement} also gives a direct
Pauli verification that $O_n^2=I$.

\subsection{Exact transfer of entropy and influence}

For normalized Pauli coefficients, define
\[
\Hf[O]=-
\sum_\sigma|\widehat O(\sigma)|^2\log_2|\widehat O(\sigma)|^2,
\quad
\Inf[O]=\sum_\sigma\wt(\sigma)|\widehat O(\sigma)|^2,
\]
and
\[
\Inf_j^{(2)}(O)=\sum_{\sigma:\sigma_j\ne I}|\widehat O(\sigma)|^2.
\]

\begin{proposition}[Hermitian-dilation transfer formulas]
Let $|f|=1$ and let $O_f$ be given by \eqref{eq:dilation}.  Then
\begin{align}
\Inf_0^{(2)}(O_f)&=1,\label{eq:anc-inf}\\
\Inf_j^{(2)}(O_f)&=\Inf_j(f),\qquad 1\le j\le n,\label{eq:data-inf}\\
\Inf[O_f]&=1+\If[f].\label{eq:total-inf}
\end{align}
If $\widehat f(A)=|\widehat f(A)|e^{i\phi_A}$, then
\begin{equation}
\Hf[O_f]=\Hf[f]+
\sum_A|\widehat f(A)|^2\hbin(\cos^2\phi_A),
\label{eq:entropy-transfer}
\end{equation}
and hence
\begin{equation}
\Hf[f]\le\Hf[O_f]\le\Hf[f]+1.
\label{eq:entropy-bounds}
\end{equation}
\end{proposition}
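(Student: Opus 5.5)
The plan is to read off the Pauli expansion of $O_f$ directly from \eqref{eq:pauli-dilation} and then compute the influences and the entropy term by term. The Pauli strings $X_0X_A$ and $Y_0X_A$, for $A\subseteq[n]$, are pairwise distinct. They differ either in the ancilla letter or in the data support $A$, so there are no collisions. Hence the normalized Pauli coefficients of $O_f$ are exactly
\[
\widehat{O_f}(X_0X_A)=\Re\widehat f(A),\qquad \widehat{O_f}(Y_0X_A)=-\Im\widehat f(A),
\]
and all other coefficients vanish. Parseval for the unimodular $f$ gives $\sum_A|\widehat f(A)|^2=\bE|f|^2=1$, which is consistent with $O_f^2=I$ and $\|O_f\|_{HS}=1$.

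\textbf{Influences.} Every supported string has a nonidentity letter ($X$ or $Y$) on the ancilla. Therefore
\[
\Inf_0^{(2)}(O_f)=\sum_A\bigl((\Re\widehat f(A))^2+(\Im\widehat f(A))^2\bigr)=\sum_A|\widehat f(A)|^2=1,
\]
which is \eqref{eq:anc-inf}. For $j\ge1$, the string $X_0X_A$ or $Y_0X_A$ is nonidentity at site $j$ exactly when $j\in A$. This gives $\Inf_j^{(2)}(O_f)=\sum_{A\ni j}|\widehat f(A)|^2=\Inf_j(f)$, which is \eqref{eq:data-inf}; here $\Inf_j(f)$ is the classical influence for complex $f$, namely $\bE|D_jf|^2$. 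Each string has weight $1+|A|$, so summing gives $\Inf[O_f]=\sum_A(1+|A|)|\widehat f(A)|^2=1+\If[f]$, which is \eqref{eq:total-inf}. Equivalently, one can sum the single-site influences.

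\textbf{Entropy.} Write $w_A=|\widehat f(A)|^2$. The two coefficients attached to $A$ have squares $w_A\cos^2\phi_A$ and $w_A\sin^2\phi_A$. Using the grouping identity for entropy, with the convention $0\log 0=0$, we get
\[
-w_Ac\log_2(w_Ac)-w_A(1-c)\log_2(w_A(1-c))=-w_A\log_2w_A+w_A\,\hbin(c),\qquad c=\cos^2\phi_A .
\]
Summing over $A$ yields \eqref{eq:entropy-transfer}. The bounds \eqref{eq:entropy-bounds} then follow from $0\le\hbin\le1$ together with $\sum_Aw_A=1$. When $\widehat f(A)=0$ the phase is undefined, but the term carries weight zero, so any choice of $\phi_A$ works.

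The only real point to verify is the first step, that the two families of strings do not collide, so that the coefficients can be read off without merging. This is immediate from the ancilla letter and the support. Everything after that is bookkeeping. One should also check that the convention for $\If[f]$ used earlier, namely $\sum_A|A||\widehat f(A)|^2$, is the same one used for complex-valued $f$.
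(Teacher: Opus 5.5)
Your proposal is correct and takes essentially the same route as the paper. You read the coefficients directly off the Pauli expansion $X_0\otimes\Re F_f - Y_0\otimes \Im F_f$, use that every term is nontrivial on the ancilla and that the two terms indexed by $A$ touch data site $j$ exactly when $j\in A$, and obtain the entropy by splitting $|\widehat f(A)|^2$ into its real and imaginary squares; you also make explicit what the paper leaves implicit, namely that the strings $X_0X_A$, $Y_0X_A$ are distinct, Parseval $\sum_A|\widehat f(A)|^2=1$, the grouping identity, and $0\le\hbin\le1$.
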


\begin{proof}
Every Pauli term in \eqref{eq:pauli-dilation} is nonidentity on the ancilla,
which proves \eqref{eq:anc-inf}.  For a data coordinate $j$, the two pieces
coming from $A$ contribute exactly when $j\in A$, and their squared coefficients
sum to $|\widehat f(A)|^2$.  This proves \eqref{eq:data-inf} and
\eqref{eq:total-inf}.  Entropy is obtained by splitting the mass
$|\widehat f(A)|^2$ into its real and imaginary squares, giving
\eqref{eq:entropy-transfer}.
\end{proof}

For the Rudin--Shapiro phase family, the real and imaginary parts of every
Fourier coefficient has equal magnitude.  Hence the entropy gain is exactly one
bit:
\begin{equation}
\Hf[O_n]=1+\sum_{j=1}^n\hbin(p_j),\qquad
\Inf[O_n]=1+\sum_{j=1}^np_j,
\label{eq:exact-quantum-HI}
\end{equation}
and
\[
\Inf_0^{(2)}(O_n)=1,\qquad
\Inf_j^{(2)}(O_n)=p_j\quad(1\le j\le n).
\]

\subsection{Application to quantum Fourier Entropy--Influence}

The dimension-free QFEI statement asks for a universal $C$ such that every
Hermitian involution $O$ satisfies
\[
\Hf[O]\le C\Inf[O].
\]
Choose
\[
a_1=\cdots=a_n=\frac1{\sqrt n},\qquad
p_1=\cdots=p_n=\frac1{n+1}.
\]
Then
\begin{equation}
\Inf[O_n]=1+\frac n{n+1}=2-\frac1{n+1}<2,
\end{equation}
while
\begin{equation}
\Hf[O_n]=1+n\hbin\!\left(\frac1{n+1}\right)
=\log_2n+O(1).
\end{equation}
Thus $\Hf[O_n]/\Inf[O_n]\to\infty$.  Under the definitions above, the family
is an obstruction to a dimension-free QFEI inequality in this generality.
The logarithmic dimensional loss in the known weak bound is of the correct order
for this example.

\subsection{Relation between tensor Rudin--Shapiro construction in Section \ref{RSp} and the construction in Section \ref{wRS}}

The constructions share a two-component orthogonal geometry but differ in where
it is realized.

In both cases, two real components are packaged as one complex quantity.  In the
tensor recursion, this yields the factor $X-iY=2\proj10$ and therefore an exact
GHZ transition.  In the scalar recursion, it yields a phase-valued function;
the Hermitian dilation then realizes each complex coefficient in the Pauli plane
spanned by $X_0$ and $Y_0$.

On the GHZ subspace, $2^{-n}Q_n$ and $2^{-n}P_n$ form an encoded Pauli pair.
Thus the first construction can be viewed as a logical-qubit realization of the
same $X/Y$ geometry used by the physical ancilla in the Hermitian dilation.  The
important distinction is that the tensor pair acts only on a two-dimensional
code sector, whereas the dilation is a full involution on the entire Hilbert
space.

\section{Remarks on quantum KKL and Aaronson--Ambainis conjecture}

The dilation family is not extremal for quantum KKL because
\[
\Inf_0^{(2)}(O_n)=1.
\]
The ancilla is maximally influential.  On the data coordinates,
\[
\max_{1\le j\le n}\Inf_j^{(2)}(O_n)=\frac1{n+1},
\]
but standard quantum KKL counts all qubits, including the ancilla.  Thus the
family is useful for normalizations and for showing why Hermitian dilation does
not transfer a unimodular classical KKL counterexample into a qubit KKL
counterexample.

The original Aaronson--Ambainis conjecture concerns bounded real scalar
low-degree polynomials.  The present operators are not of interesting instances.  As a
Pauli/operator calibration family, they again satisfy an influential-coordinate
conclusion trivially because of the ancilla.  Nevertheless, their Pauli weight
distribution is explicit: for $a_j=1/\sqrt n$ it is
\[
1+K_n,\qquad K_n\sim\operatorname{Binomial}\left(n,\frac1{n+1}\right).
\]
Thus the exact degree is $n+1$, while the family is approximable in $L^2$ by a
degree bound independent of $n$ at any fixed error.  This separates approximate
low degree from small Pauli Fourier entropy.

\section{Summary of sharp formulas}

For the tensor Pauli recursion of Section \ref{RSp} (non-boolean output):
\[
\boxed{
\deg P_n=\deg Q_n=n+1,
\quad
\|P_n\|_{\op}=\|Q_n\|_{\op}=2^n,
\quad
\|P_n\|_{\HS}=\|Q_n\|_{\HS}=2^{n/2},
\quad
\|P_n\|_{\ps}=\|Q_n\|_{\ps}=1.
}
\]
For a unimodular scalar function $f$ and its Hermitian dilation (boolean output),
\[
\boxed{
O_f=\begin{pmatrix}0&F_f\\F_f^\dagger&0\end{pmatrix},
\qquad
O_f^2=I,
\qquad
\If[O_f]=1+\If[f],
\qquad
\Hf[f]\le\Hf[O_f]\le\Hf[f]+1.
}
\]
For the weighted Rudin--Shapiro choice $a_j=1/\sqrt n$,
\[
\boxed{
\If[O_n]<2,
\qquad
\Hf[O_n]=\log_2n+O(1).
}
\]

\subsection{Back to quantum Aaronson--Ambainis conjecture. Can we diminish the maximal qubit load of anti-commuting Pauli strings?}
\label{max}

One would wish to disprove the quantum Aaronson--Ambainis conjecture by the same $O_d$ hamiltonian.

Unfortunately, when calculating $\text{maxInf }[H_d]$ (or its normalized version $O_d$) we will meet a very special qubit that is loaded by {\bf all} strings $P_\ell\otimes B_\ell$.
It is this special qubit having the property that all strings of $H_d$ have a non-trivial Pauli over it, the so called {\bf root qubit}. If the reader remembers it is the fourth qubit in $E_2$ and the thirteenth qubit in $E_3$ and $40$-th qubit in $E_4$... .

\bigskip 

Here is a natural question.
Can one construct $3^r$ Pauli strings which are pairwise anti-commuting, such that
\begin{enumerate}
    \item each qubit is nontrivial in at most $3^{r/2}$ strings, and
    \item each string has weight at most $10r$,
\end{enumerate}
with no restriction on the total number of qubits? In other words can one {\bf spread} non-trivial Pauli's in $P$-strings to make the load lighter over each
qubit and keeping the weight linear in $r$ and having exponentially in $r$ many anti-commuting strings?

\bigskip

No, not for general $r$. In fact, the three requirements are inconsistent for every integer $r \ge 7$.

\begin{theorem}
Let $\mathcal{S}$ be a set of $N$ pairwise anti-commuting Pauli strings. Suppose every string has weight at most $w$, and every qubit is nontrivial in at most $L$ strings. Then
\[
    w \ge \frac{3(N-1)}{2L}.
\]
\end{theorem}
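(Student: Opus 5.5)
Fix one string $S\in\mathcal S$ and count anticommuting pairs that involve it. Two Pauli strings anticommute exactly when the number of qubits on which both are nontrivial and carry different Pauli letters is odd. In particular, $S$ must share at least one such qubit with each of the other $N-1$ strings. The plan is to double count, over all strings, the number of incidences (qubit, ordered pair of strings) where the two strings carry different nonidentity letters at that qubit.

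\textbf{Lower bound on the count.} Let $M$ be the number of triples $(q,S,T)$ with $S\neq T$ such that $S_q,T_q\neq I$ and $S_q\neq T_q$. Every ordered pair $(S,T)$ with $S\ne T$ anticommutes, so it contributes at least one such qubit. Hence
\[
M\ge N(N-1).
\]

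\textbf{Upper bound on the count.} Fix a qubit $q$ and let $n_X,n_Y,n_Z$ be the numbers of strings carrying $X,Y,Z$ at $q$. Put $\ell_q=n_X+n_Y+n_Z\le L$. The contribution of $q$ to $M$ is
\[
\ell_q^2-n_X^2-n_Y^2-n_Z^2\le \ell_q^2-\frac{\ell_q^2}{3}=\frac{2}{3}\ell_q^2\le \frac{2}{3}L\,\ell_q .
\]
The middle inequality is Cauchy--Schwarz, and this step is where the factor $3$ enters. Summing over qubits and using $\sum_q \ell_q=\sum_{S}\wt(S)\le Nw$ gives
\[
M\le \frac{2}{3}L\,N w .
\]

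\textbf{Conclusion.} Comparing the two bounds,
\[
N(N-1)\le \frac{2}{3}LNw,
\]
that is,
\[
w\ge \frac{3(N-1)}{2L}.
\]
The only delicate point is the per-qubit estimate. One must notice that pairs carrying the same letter at $q$ contribute nothing, and that the three-letter alphabet caps the fraction of "disagreeing" pairs at $2/3$. Everything else is routine double counting.

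As a sanity check, take $N=3^r$, $L=3^{r/2}$ and $w=10r$. The bound then demands $10r\ge \tfrac32(3^{r/2}-3^{-r/2})$, which fails for $r\ge7$. This confirms the inconsistency claim stated just before the theorem.
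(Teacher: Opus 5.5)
Your proof is correct and is essentially the paper's argument. The paper double counts \emph{unordered} pairs that locally anti-commute at each qubit, bounding $x_qy_q+x_qz_q+y_qz_q\le d_q^2/3\le Ld_q/3$ per qubit and comparing the total $LNw/3$ against $\binom{N}{2}$; your count of \emph{ordered} pairs doubles both sides, and the Cauchy--Schwarz step and the use of $\sum_q d_q\le Nw$ are otherwise identical.
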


\begin{proof}
For a qubit $q$, let $x_q,y_q,z_q$ be the numbers of strings in $\mathcal{S}$ that have $X,Y,Z$ respectively on that qubit. Let
\[
    d_q=x_q+y_q+z_q.
\]
By assumption, $d_q\le L$ for every qubit $q$.

For two strings to locally anti-commute on qubit $q$, their non-identity Paulis on $q$ must be different. Hence the number of unordered pairs of strings that locally anti-commute on qubit $q$ is
\[
    x_qy_q+x_qz_q+y_qz_q.
\]
Using $d_q=x_q+y_q+z_q$, we have
\[
    x_qy_q+x_qz_q+y_qz_q
    =\frac{d_q^2-x_q^2-y_q^2-z_q^2}{2}
    \le \frac{d_q^2}{3}
    \le \frac{Ld_q}{3}.
\]
Now sum this upper bound over all qubits. The total number of local anti-commutation events is at most
\[
    \sum_q \frac{Ld_q}{3}
    =\frac{L}{3}\sum_q d_q.
\]
The quantity $\sum_q d_q$ is the total weight over all strings in $\mathcal{S}$, so
\[
    \sum_q d_q \le Nw.
\]
Therefore the total number of local anti-commutation events is at most
\[
    \frac{LNw}{3}.
\]

On the other hand, since the $N$ Pauli strings are pairwise anti-commuting, every unordered pair of distinct strings must locally anti-commute on at least one qubit. Thus the number of local anti-commutation events is at least
\[
    \binom{N}{2}.
\]
Combining the lower and upper bounds gives
\[
    \binom{N}{2}\le \frac{LNw}{3}.
\]
Canceling $N$ yields
\[
    \frac{N-1}{2}\le \frac{Lw}{3},
\]
which is equivalent to
\[
    w \ge \frac{3(N-1)}{2L}.
\]
\end{proof}

In the proposed parameters,
\[
    N=3^r,\qquad L=3^{r/2},\qquad w\le 10r.
\]
The theorem gives the necessary condition
\[
    w \ge \frac{3(3^r-1)}{2\cdot 3^{r/2}}
    =\frac{3}{2}\left(3^{r/2}-3^{-r/2}\right).
\]
Therefore any such construction must satisfy
\[
    10r \ge \frac{3}{2}\left(3^{r/2}-3^{-r/2}\right).
\]
But for $r=7$,
\[
    \frac{3}{2}\left(3^{7/2}-3^{-7/2}\right) \approx 70.116 > 70 = 10r.
\]
The left-hand side grows only linearly in $r$, while the required lower bound grows like $3^{r/2}$. Hence the inequality fails for every integer $r\ge 7$.

\begin{corollary}
There is no construction of $3^r$ pairwise anti-commuting Pauli strings satisfying both
\[
    \text{maximum qubit load} \le 3^{r/2}
    \qquad\text{and}\qquad
    \text{maximum weight} \le 10r
\]
for all integer $r\ge 7$.
\end{corollary}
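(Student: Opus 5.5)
The corollary should follow directly from the preceding theorem. We argue by contradiction, specializing to $r=7$. Since the statement is universal over $r\ge 7$, exhibiting a single $r$ in that range where no construction exists is enough.

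\emph{Step 1.} Fix $r\ge 7$ and suppose $\mathcal S$ is a family of $N=3^r$ pairwise anti-commuting Pauli strings. Assume every string has weight $w\le 10r$ and every qubit is nontrivial in at most $L=3^{r/2}$ strings. When $r$ is odd, $L$ is not an integer, but the theorem only uses $d_q\le L$ as a real inequality, so it still applies.

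\emph{Step 2.} The theorem gives $w\ge \frac{3(N-1)}{2L}=\frac32\left(3^{r/2}-3^{-r/2}\right)$. Combining this with $w\le 10r$ forces
\[
10r\ge \tfrac32\left(3^{r/2}-3^{-r/2}\right).
\]

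\emph{Step 3.} We show this inequality fails for every $r\ge 7$. Set $g(r)=\frac32(3^{r/2}-3^{-r/2})-10r$.

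At $r=7$ we have $3^{7/2}=27\sqrt3\approx 46.77$, so $g(7)\approx 70.12-70>0$. Because the margin is small, it should be confirmed with a rigorous bound. One option is to note that $\sqrt3>1.732$ gives $27\sqrt3>46.764$, and $3^{-7/2}<0.03$. Then
\[
\tfrac32\left(3^{7/2}-3^{-7/2}\right)>\tfrac32\,(46.734)>70.1 .
\]

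For monotonicity, $g'(r)=\frac{3\ln 3}{4}\left(3^{r/2}+3^{-r/2}\right)-10$, which is positive for $r\ge 7$ since $0.82\cdot 46>10$. Hence $g(r)>0$ for all real $r\ge 7$, and in particular for all integers $r\ge 7$. This contradicts Step 2, so no such family exists for any $r\ge 7$.

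The only delicate point is the tight numerical check at $r=7$, which must be done with explicit rational bounds rather than decimal approximations. The rest is immediate from the theorem.
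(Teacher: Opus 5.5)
Your proposal is correct and follows the paper's argument: substitute $N=3^r$, $L=3^{r/2}$, $w\le 10r$ into the theorem, check that $10r\ge\frac32(3^{r/2}-3^{-r/2})$ fails at $r=7$, and extend to all $r\ge 7$. Your explicit rational bounds at $r=7$ and the derivative argument for $g$ make rigorous the paper's informal ``linear versus exponential growth'' step, and they give the stronger reading (no construction for each individual $r\ge 7$), so your opening remark that a single $r$ suffices is unnecessary.
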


Equivalently, with maximum weight $w=O(r)$, the maximum qubit load must be at least on the order of
\[
    \frac{3^r}{r},
\]
not $3^{r/2}$.

\subsection{Quantum Aaronson--Ambainis conjecture holds for all hamiltonians built on anti-commuting Pauli strings}

We consider $L=3^r$ Pauli strings on  $n\ge \frac{3^r-1}{2}$ qubits,  we also postulate that the weight (degree) of each string is between $r$ and $10r$. The main requirement is that all strings $P_1,\dots, P_L$ pairwisely anti-commute.

There are  many such systems of strings, one of them, constructed by \cite{JKMN} was explained above.

Now we look at hamiltonian
$$
P =\sum_{\ell=1}^L \alpha_\ell P_\ell
$$
We normalize it as follows:
$$
\sum_\ell |\alpha_\ell|^2=1\,.
$$

Notice that  because of normalization
$$
\|P\|_{HS} =1, \quad \|P\|_\op =1\,.
$$
The second equality uses that $P_\ell$ are pairwise anti-commuting. So $P$ is a quantum boolean function, with $\Var[P]=1$.

\medskip

We wish to prove that there are absolute constants $C, K$ such that
\begin{equation}
\label{AAQ}
\Var[P] \le C r^K\, \text{maxInf}[P]\,.
\end{equation}
If we raise $\Var[P]=1$ to the power $K$ we will get the conjectures Aaronson--Ambainis inequality for this (very special) quantum boolean function.
For all scalar boolean functions it is proved in \cite{OSSS} and \cite{Z}.

For every qubit $q$ denote by $S(q,x)$ those $\ell\in [L]$ that have Pauli $X$ in this qubit. Similarly introduce $S(q,y)$, $S(q,z)$.
Now put
\begin{align*}
&\xi_q=\sum_{\ell\in S(q, x)} |\alpha_\ell|^2,
\\
&\eta_q=\sum_{\ell\in S(q, y)} |\alpha_\ell|^2,
\\
&\zeta_q=\sum_{\ell\in S(q, z)} |\alpha_\ell|^2.
\end{align*}

Our set of string $\{P_1,\dots, P_L\}$ is provided with probabilities: the probability to choose $P_\ell$  is $|\alpha_\ell|^2$.
We will be choosing strings independently but with these probabilities.

\medskip

Let us consider event $E(q, x, y)$ that choosing independently two strings one of them has $X$ in qubit $q$ and another $Y$ in qubit $q$.
Similarly consider $E(q, x, z)$ and $E(q, x, z)$.

\medskip

Clearly
\begin{equation}
\label{xyz}
\bP (E(q, x, y)) = \xi_q\cdot \eta_q, \,\,\bP (E(q, x, z)) = \xi_q\cdot \zeta_q,\,\,\bP (E(q, y, z)) = \eta_q\cdot \zeta_q\,.
\end{equation}

As all pairs of strings anti-commute the probability that choosing two different strings we will have for {\bf some} $q$ either $X, Y$ overlap, or $X, Z$ overlap or $Y, Z$ overlap is $1$.

This means that
$$
\sum_q \big( \bP (E(q, x, y)) + \bP (E(q, x, z)) + \bP (E(q, y, z))\big) \ge 1\,.
$$
Hence,
$$
1\le \sum_q \big(\xi_q\cdot \eta_q + \xi_q\cdot \zeta_q + \eta_q\cdot \zeta_q\big)
$$
Put
$$
D(q) = \xi_q + \eta_q+\zeta_q\,.
$$
Then 
$$
1\le \sum_q \frac12 ( D(q)^2 - \xi_q^2-\eta_q^2 -\zeta_q^2) \le \frac13 \sum_q D(q)^2\,.
$$
Automatically
$$
3\le \max_q D(q) \cdot \sum_q D(q)\le \max_q D(q) \sum_q \sum_{\ell\in NT(q)} |\alpha_\ell|^2\,.
$$
Here $NT(q)$ is the union of indices of strings being non-trivial over $q$, that is $S(q, x)\cup S(q, y)\cup S(q, z)$.

\medskip

Using Fubini we write
$$
3\le \max_q D(q) \,\,\sum_{\ell=1}^L |\alpha_\ell|^2 \cdot \sharp \{q: \, P_\ell \,\, \text{has a non-trivial Pauli on qubit}\,\,q\}\,.
$$ 
But this sum over $q$ is less than $10r$ by the assumption on the weight of each string is at most $10\,r$.

\medskip

Therefore,

\begin{equation}
\label{Dq}
\max_q D(q) \ge \frac{3}{10r}\,.
\end{equation}
But 
$$
D(q) =\xi_q+\eta_q+\zeta_q = \sum_{\ell \,\, \text{that has non-trivial Pauli over}\,\,q} |\alpha_\ell |^2 = \Inf_q[P]\,.
$$

We got that
$
\frac{3}{10 r} \le \max_q D(q) =\text{maxInf}[P]\,.
$
This implies \eqref{AAQ} with $C=10$, $K=1$.

\begin{remark}
\label{ACP}
If we relax the requirement on Pauli strings in term of weight, for example by saying that each anti-commuting string can have degree between $r$ an $r^{100}$, then we still have \eqref{AAQ}.
Looks like the more non-commuting are strings the better is Aaronson--Ambainis inequality. At this moment this is a philosophical statement.
\end{remark}

\subsection{Why $p=\frac{2d}{d+1}$ exponent in commutative Bohnenblust--Hille inequality is sharp}

There is a Kahane--Salem--Zygmund random construction (see e.g \cite{D}, 
Sections 7.3, 7.4) where this is proved. Now we suggest a non-random way to see this.
Let us fix $d$ and large $L=2^k$, and let us split $dL$ qubits into $d$ blocks of $L$ qubit each. 
Operator $Z_{j, \ell}$ has identity on all qubits except the $\ell$-th qubit in 
$j$-th block ($j\in [d], \ell\in [L]$) where we put $Z$. Consider $B_\ell$ from \eqref{eq:B-expanded}. It has $L^k$ strings. Operator 
$$
S=B_1+\dots +B_L
$$
has  $L^{d-1}$ strings, in fact
\begin{equation}
\label{B}
B =L\cdot Z_{1, \vec 0}  \sum_{a_2\dots, a_d\in \{0,1\}^k} (-1)^{a_{2}\cdot a_3}\dots (-1)^{a_{d-1}\cdot a_d} Z_{2, a_2}\dots Z_{d, a_d}
\end{equation}
 As we have $L^{d-1}$ string of absolute value $L$ we have
$$
\|\hat B\|_{\ell^p} =L^{\frac{d}{p}}\,.
$$
If we look at the sum in \eqref{B}, we see that this is the sum of entries of $D_2H\dots HD_d$. From this it easy to see (by the same doubling trick as above) that
$$
\|B\|_{op}  \le L^{\frac{d-1}2} L^{\frac12}L^{\frac 12} = L^{\frac{d+1}2}\,.
$$
Looking at BH inequality $\|\hat B\|_p \le C(d) \|B\|_\op$ that becomes $L^{\frac{d}{p}} \le C(d) \,  L^{\frac{d+1}2}$ and choosing $L\to \infty$ we see that
$$
p\ge \frac{2d}{d+1}\,.
$$

We used only one Pauli, namely, Z, so it is actually a commutative construction.


\begin{thebibliography}{999}




\bibitem{AA}
S. Aaronson and A. Ambainis,
\emph{The need for structure in quantum speedups},
Theory Comput. 10 (2014), 133--166.





\bibitem{BGKT}
Sergey Bravyi, David Gosset, Robert K\"onig, and Kristan Temme.
\newblock Approximation algorithms for quantum many-body problems.
\newblock {\em J. Math. Phys.}, 60(3):032203, 18, 2019.

\bibitem{BuEtAl}
K.~Bu, R.~J. Garcia, A.~Jaffe, D.~E. Koh, and L.~Li,
\emph{Complexity of quantum circuits via sensitivity, magic, and coherence},
Commun. Math. Phys. \textbf{405} (2024), article 161,
\href{https://doi.org/10.1007/s00220-024-05030-6}{doi:10.1007/s00220-024-05030-6}.

\bibitem{BLP}
L. Ben Efraim, F. Lust-Piquard,
\newblock Poincar\'e type inequalities on the discrete cube and in the CAR algebra.
\newblock Probability Theory and Related Fields 141 (2008), no. 3--4, 569--602.
DOI: 10.1007/s00440-007-0094-x.

\bibitem{BSVZ}
L. Becker, J. Slote, A. Volberg, H. Zhang,
\newblock Approximating the operator norm of local Hamiltonians via few quantum states,
\newblock arXiv:2509.11979v3, pp. 1--34.

\bibitem{BH}
H. F. Bohnenblust, E.  Hille,
\newblock On the Absolute Convergence of Dirichlet Series.”
\newblock Annals of Mathematics (Second Series) 32 (1931), no. 3, 600--622. 


\bibitem{CHP}
Chen, S., Huang, H.-Y., Preskill, J. (2023). 
\newblock Learning to Predict Arbitrary Quantum Processes. 
\newblock PRX Quantum, 4, Article 040337. https://doi.org/10.1103/prxquantum.4.040337



\bibitem{D} 
A. Defant, D. Garcia, M. Maestre, P. Sevilla-Peris
\newblock Dirichlet Series And Holomorphic Functions In High Dimensions,
\newblock Cambridge University Press, 2019, ISBN 978-1-108-47671-3 Hardback


\bibitem{DFOOS}
A. Defant, L. Frerick, J. Ortega-Cerd\`a, M. Ounaies, and K. Seip, 
\newblock The Bohnenblust--Hille inequality for homogeneous polynomials is hypercontractive. 
\newblock Ann. Math. (2),
174(1):485–497, 2011.

\bibitem{DMP}
 A. Defant, M. Mastylo, and A. P\'erez,
\newblock On the Fourier spectrum of functions on
boolean cubes. Mathematische Annalen, 374(1–2):653--680, 2019.

\bibitem{DR}  H. A. Dye, B. Russo, 
\newblock A note on unitary operators in \(C^{*}\)-algebras,
\newblock Duke Mathematical Journal, Vol. 33, No. 2, pp. 413–416 (1966).


\bibitem{EI}
A. Eskenazis, P. Ivanisvili,
\newblock Learning Low-Degree Functions from a Logarithmic Number of Random Queries.
\newblock Proceedings of the 54th Annual ACM SIGACT Symposium on Theory of Computing4(STOC 2022), Rome, Italy, June 20--24, 2022, pp. 203–207.5DOI: 10.1145/3519935.3519981.

\bibitem{EPR}
A. Einstein, B. Podolsky, and N. Rosen, 
\newblock Can Quantum-Mechanical Description of Physical Reality Be Considered Complete?
\newblock Phys. Rev. 47, 777 (1935).



\bibitem{GHZ}
Daniel M. Greenberger, Michael A. Horne, and Anton Zeilinger, 
\newblock in "Bell's Theorem, Quantum Theory and Conception of the Universe", 
\newblock edited by M. Kafatos (Kluwer Academic, Dordrecht, 1989), p. 69.

\bibitem{HKP}
H.-Y. Huang, R. Kueng, J. Preskill, (2020). 
\newblock Predicting many properties of a quantum system from very few measurements. 
\newblock Nature Physics, 16, 1050–1057. https://doi.org/10.1038/s41567-020-0932-7.

\bibitem{JKMN} Z.~Jiang, A.~Kalev, W.~Mruczkiewicz, and H.~Neven,
\emph{Optimal fermion-to-qubit mapping via ternary trees with applications to reduced quantum states learning},
Quantum \textbf{4} (2020), 276,
\href{https://doi.org/10.22331/q-2020-06-04-276}{doi:10.22331/q-2020-06-04-276}.


\bibitem{L}
Elliott~H. Lieb.
\newblock The classical limit of quantum spin systems.
\newblock {\em Comm. Math. Phys.}, 31:327--340, 1973.


\bibitem{Mermin1990} 
D. Mermin,
\newblock Extreme Quantum Entanglement in a Superposition of Macroscopically Distinct States,
\newblock Phys. Review Letters, v. 65, no.15 (1990), 1838--1840.

\bibitem{MO}
A. Montanaro, T. J. Osborne
\newblock Quantum Boolean Functions.
\newblock Chicago Journal of Theoretical Computer Science,42010, Article 1, pp. 1–45.5DOI: 10.4086/cjtcs.2010.001.

\bibitem{OSSS}
R. O'Donnell, M. Saks, O. Schramm,  R.A. Servedio, 
\newblock Every decision tree has an influential variable.
\newblock Proceedings of the 46th Annual IEEE Symposium on Foundations4of Computer Science (FOCS 2005), pp. 31--39.

\bibitem{RWZ}
C.~Rouz\'e, M.~Wirth, and H.~Zhang,
\emph{Quantum Talagrand, KKL and Friedgut's theorems and the learnability of quantum Boolean functions},
Commun. Math. Phys. \textbf{405} (2024), article 95,
\href{https://doi.org/10.1007/s00220-024-04981-0}{doi:10.1007/s00220-024-04981-0}.

\bibitem{GS}
G. Schechtman,
\newblock Entropy versus influence for complex functions of modulus one,
\newblock arXiv:2009.12753 [math.CO], 2020.

\bibitem{S}
J.~Slote,
\newblock Dense Hamiltonians at the Parseval Limit,
\newblock arXiv:2608.01424, pp.1--8.

\bibitem{T}
J. Tomiyama, 
\newblock On the projection of norm one in \(C^{*}\)-algebras.
\newblock Proceedings of the Japan Academy, Series A, Mathematical Sciences, 33(10), 608-612 (1957).

\bibitem{VZ} 
A. Volberg, H. Zhang, 
\newblock Noncommutative Bohnenblust–Hille inequalities.
\newblock Mathematische Annalen 389 (2024), no. 2, 1609--1652.4DOI: 10.1007/s00208-023-02680-0.

\bibitem{VZ1}
A. Volberg, H. Zhang
\newblock Two tensor Rudin--Shapiro constructions,
\newblock Preprint, 2026, pp. 1--8.

\bibitem{Z}
H. Zhang,
\newblock
The Boolean case of the Aaronson–Ambainis influence conjecture
without OSSS.
\newblock Preprint, 2026, pp. 1--5.





\end{thebibliography}
\end{document}